\numberwithin{equation}{section} 
\definecolor{vg}{rgb}{0.0, 0.26, 0.15}
\newcommand{\Rb}{{\mathbb{R}}}
\newcommand{\R}{{\mathcal{R}}}
\newcommand{\comment}[1]{\vskip.3cm
\fbox{%
\parbox{0.93\linewidth}{\footnotesize #1}}
\vskip.3cm}
\def\essinf{\operatornamewithlimits{ess\,inf}}
\def\supp{\operatornamewithlimits{supp}}
\renewcommand{\div}{\divop}
\renewcommand{\phi}{\phi}
\def\le{\leqslant}
\def\leq{\leqslant}
\def\ge{\geqslant}
\def\geq{\geqslant}
\def\phi{\varphi}
\def\rho{\varrho}
\def\epsilon{\varepsilon}
\def\vartheta{\theta}
\def\supp{\operatorname{supp}}
\def\esssup{\operatornamewithlimits{ess\,sup}}
\def\div{\qopname\relax o{div}}
\def\dist{\qopname\relax o{dist}}
\def\px{{p(\cdot)}}
\def\loc{{\rm loc}}
\def\BV{{\rm BV}}
\newcommand{\inc}[1]{\hyperref[def:aInc]{{\normalfont(Inc){\ensuremath{_{#1}}}}}}
\newcommand{\dec}[1]{\hyperref[def:aDec]{{\normalfont(Dec){\ensuremath{_{#1}}}}}}
\newcommand{\ainc}[1]{\hyperref[def:aInc]{{\normalfont(aInc){\ensuremath{_{#1}}}}}}
\newcommand{\adec}[1]{\hyperref[def:aDec]{{\normalfont(aDec){\ensuremath{_{#1}}}}}}
\newcommand{\adeci}[1]{\hyperref[def:aDeci]{{\normalfont(aDec){\ensuremath{_{#1}^\infty}}}}}
\newcommand{\azero}{\hyperref[def:a0]{{\normalfont(A0)}}}
\newcommand{\aone}{\hyperref[def:a1]{{\normalfont(A1)}}}
\newcommand{\aones}[1]{\hyperref[def:a1s]{{\normalfont(A1-{\ensuremath{{#1}})}}}}
\newcommand{\Phiw}{\Phi_{\text{\rm w}}}
\newcommand{\Phic}{\Phi_{\text{\rm c}}}
\def\rightharpoonupfill@{\arrowfill@\relbar\relbar\rightharpoonup}
\newcommand{\xrightharpoonup}[2][]{\ext@arrow
0359\rightharpoonupfill@{#1}{#2}} \makeatother
\def\dist{\text{dist}}
\def\e{{\epsilon}}
\def\x{{\times}}
\def\R{{\mathbb  R}}
\def\Rn{{\mathbb  R^n}}
\def\esssup{\operatornamewithlimits{ess\,sup}}
\def\essinf{\operatornamewithlimits{ess\,inf}}
\newcommand{\Hzero}{\hyperref[Hzero]{{\normalfont(H0)}}}
\newcommand{\Hone}{\hyperref[Hone]{{\normalfont(H1)}}}
\newcommand{\Htwo}{\hyperref[Htwo]{{\normalfont(H2)}}}
\newcommand{\Hthree}{\hyperref[Hthree]{{\normalfont(H3)}}}
\newtheorem{theorem}[equation]{Theorem}
\newtheorem{lemma}[equation]{Lemma}
\newtheorem{proposition}[equation]{Proposition}
\theoremstyle{definition}
\newtheorem{definition}[equation]{Definition}
\theoremstyle{remark}
\newtheorem{remark}[equation]{Remark}
\numberwithin{equation}{section}
\newcommand{{\rr}}{{\mathbb R}}
\newcommand{\N}{{\mathbb N}}
\newenvironment{@abssec}[1]{%
 \if@twocolumn
   \section*{#1}%
 \else
   \vspace{.05in}\footnotesize
   \parindent .2in
 {\upshape\bfseries #1. }\ignorespaces
 \fi}
 {\if@twocolumn\else\par\vspace{.1in}\fi}
\begin{document}

\title[Quasi-convex functionals with variable exponent growth]
{Relaxation of quasi-convex functionals with variable exponent growth}
	
	
\author[G.\ Bertazzoni]{Giacomo Bertazzoni}
\address{G.\ Bertazzoni, Dipartimento di Scienze Fisiche, Informatiche e Matematiche, Università degli Studi di Modena e Reggio Emilia, via Campi 213/B, 41125 Modena, Italy}
\email{\href{mailto:giacomo.bertazzoni@unimore.it}{\tt giacomo.bertazzoni@unimore.it}}

\author[P.\ Harjulehto]{Petteri Harjulehto}
\address{P.\ Harjulehto,
Department of Mathematics and Statistics,
FI-00014 University of Helsinki, Finland}
\email{\href{mailto:petteri.harjulehto@helsinki.fi}{\tt petteri.harjulehto@helsinki.fi}}

\author[P.\ Hästö]{Peter Hästö}
\address{P.\ Hästö, Department of Mathematics and Statistics,
FI-00014 University of Helsinki, Finland}
\email{\href{mailto:peter.hasto@helsinki.fi}{\tt peter.hasto@helsinki.fi}}

\author[E.\ Zappale]{Elvira Zappale}
\address{E.\ Zappale, Dipartimento di Scienze di Base e Applicate per l’Ingegneria, Sapienza - Universit\`{a} di Roma,
			via Antonio Scarpa, 16, 00161 Roma, Italy} 
\email{\href{mailto:elvira.zappale@uniroma1.it}{\tt elvira.zappale@uniroma1.it}}

\date{\today}

\begin{abstract}
We prove a relaxation result for a quasi-convex bulk integral functional 
with variable exponent growth in a suitable space of bounded variation type.
A key tool is a decomposition under mild assumptions of the energy into 
absolutely continuous and singular parts weighted via a recession function.
\end{abstract}

\subjclass[2020]{26A45, 26B30, 46E35, 46E99, 49J45}
\keywords{Functions of bounded variation, generalized Orlicz spaces, modular, integral representation, imaging, relaxation, quasi-convexity}

\maketitle


\section{Introduction and main result}\label{sect:introduction}

Variable exponent spaces have been studied since the 1980s \cite{KR, S}. 
In the 1990s, motivated by the study of composite materials, Zhikov focused on variational integrals with non-standard growth \cite{Z1, Z2, Z3, Z5, ZKO}.
Since then integral functionals defined in variable exponent spaces have been actively studied, both in the context of 
regularity theory and applications to electrorheological fluids and homogenization, see \cite{DieHHR11} and its bibliography.

Existence theory of several models have been considered, related to optimal design and double phase problems, thin structures, dielectric materials, homogenization and fluids: we mention 
\cite{BZ1, BZ2, BEZ, BHH, CM, EP, ElePZ24, MM, Sy}, among a wider literature, where lower semicontinuity, relaxation and variational convergence for integral functionals with densities satisfying a
$\px$-growth condition have been considered, under a variety of continuity assumptions on the exponent $p$.
In these papers integral representations are obtained, more or less explicitly, according to the regularity hypotheses on $p$.

In the Sobolev space $W^{1,p(\cdot)}(\Omega; \mathbb R^m)$ in \cite{CM, ElePZ24, Sy} the 
$\Gamma$-limit (see \cite{Dal93}) of such energy functionals is still an integral functional of the same type and
growth, provided the exponent is $\log$-H\"older continuous (Definition~\ref{logH}). This condition ensures that we can freeze the exponent on small balls and use blow-up methods introduced by 
Fonseca and Müller \cite{FonM92, FonM93}. Similarly, lack of $\log$-H\"older continuity entails a singular term 
in the measure representation of functionals, see \cite{Z5} and also \cite{BZ1,BZ2}.

An additional challenge arises in variational problems related to imaging problems where 
linear growth ($p=1$) energies are useful.
Chen, Levine and Rao \cite{CLR} proposed a variable exponent generalization of the so-called 
ROF total variation model in classical BV-space. This was later extended to variable 
exponent \cite{HarHL08, HarHLT13, LLP10} and double phase \cite{HarH21} BV-spaces, and recently to general Orlicz $\BV^\phi$-spaces \cite{EleHH25}. However, the models which combine linear and 
super-linear growth have so far only been considered for convex energies while 
for linear growth energies there is a well-developed theory of quasi-convex energies, see 
\cite[Section~5.5]{AmbFP00}.

More recently, another point of view has been considered in \cite{ARS, SSS}, taking into account 
not only bulk energies but their coupling with surface energies, thus leading to the study 
of variable exponent versions of so-called Mumford--Shah-type functionals 
(see \cite[Chapter~6]{AmbFP00}) with bulk energy of type
\begin{equation}\label{intintro}
\int_\Omega f(\nabla u)^{p(x)} dx,
\end{equation} 
for superlinear growth, $\inf_\Omega p>1$. 
In these papers' framework of special functions with bounded variation and variable exponent growth, quasiconvexity of the bulk integrand appears naturally as necessary and sufficient for a lower semicontinuous energy, as in the standard growth case.

When dealing with composite materials, we can describe failure phenomena, such
as fracture, by zones of the domain with linear energy behavior in the relaxation process.
Orlicz--Sobolev space bulk energies cannot captured this, which leads us to the extension 
of the above non-standard growth functional \eqref{intintro} to a free discontinuity
setting, where singularities may appear in the form of jump discontinuities or Cantor measures. 

The main contribution of this paper is to introduce the first model which features both 
quasi-convex anisotropy of \cite{ADM, FonM92, FonM93} and the combination of linear and super-linear growth of \cite{EleHH25, HarHL08, HarHLT13}. 
We provide sufficient conditions for lower semicontinuity and relaxation of
functionals of the form \eqref{intintro} when the exponent $p$ may take the value $1$. Once again, the $\log$-H\"older continuity ensures the lack of the Lavrentiev phenomenon and 
that the only concentration effect is the expected one from the BV-space described by the 
recession function.
However, the subtlety of the problem is illustrated by the fact that a slightly stronger 
vanishing $\log$-Hölder continuity is required in the linear growth set $\{p=1\}$.

To state our main result let $A\subset \Omega\subset \mathbb R^n$ be open. 
We consider the lower semicontinuous envelope of \eqref{intintro} with respect to the strong $L^{p(\cdot)}$ convergence, i.e.\ the functional
\begin{equation}\label{calF}
\mathcal F(u,A)
:=
\inf\bigg\{\liminf_{h\to \infty}\int_A f(\nabla u_h)^{p(x)}\, dx \,\Big|\, u_h \in W^{1,\px}(A;\R^m), u_h \to u \text{ in } L^{\px}(A;\R^m)\bigg\}
\end{equation}
for $u\in L^\px(\Omega; \R^m)$; we abbreviate $\mathcal F(u):=\mathcal F(u, \Omega)$. 
We obtain an integral representation in the space of functions with 
generalized growth bounded variation of this functional in terms of the point-wise (weak) 
\emph{recession function} 
\[
f^\infty(\xi):=\limsup_{t\to\infty}\frac{f(t\xi)}{t}.
\]

\begin{theorem}\label{thm:main}
Let $\Omega$ be a bounded open set with Lipschitz boundary,
$p: \Omega\to [1,\infty)$ be $\log$-Hölder continuous and
assume that $f:\mathbb R^{m\times n}\to [0,\infty)$ satisfies the following assumptions. 
\begin{itemize}
\item[\Hzero{}] $f$ is continuous and $f(0)=0$. \label{Hzero}
\item[\Hone{}]\label{Hone}
$\displaystyle f(\xi)\leq\int_{(0, 1)^n} f(\xi + \nabla u)\,dx$
for every $\xi\in\mathbb{R}^{n\times m}$ and $u \in W^{1,\infty}_0((0, 1)^n;\mathbb R^m)$
(quasiconvexity).
\item[\Htwo{}] There exists $m>0$ such that $m\,|\xi| \le f(\xi)$ for every $\xi \in \mathbb R^{n\times m}$. \label{Htwo}
\item[\Hthree{}] There exists $M\ge 1$ such that  $f(\xi)\le M(1+|\xi|)$ for every $\xi \in \mathbb R^{n\times m}$. \label{Hthree}
\end{itemize}
Then the relaxed functional has the integral representation
\[
\mathcal F(u,\Omega)=\int_\Omega f(\nabla u)^{p(x)} \,dx 
+ \int_{\{x\in \Omega\,|\, p(x)=1\}} f^\infty\Big(\frac{d D^s u}{d |D^s u|}\Big) \,d|D^s u|
\]
for all $u \in \BV^\px(\Omega; \R^m)$.
\end{theorem}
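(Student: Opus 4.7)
The plan is to establish the two matching inequalities $\mathcal F(u,\Omega)\geq$ RHS and $\mathcal F(u,\Omega)\leq$ RHS separately, localizing to the disjoint regions $\{p>1\}$ and $\{p=1\}$, where very different machineries apply. Globally, I would fix an admissible sequence $\{u_h\}\subset W^{1,\px}(\Omega;\mathbb R^m)$ with $u_h\to u$ in $L^\px$ and $\liminf_h \int_\Omega f(\nabla u_h)^{p(x)}\,dx<\infty$, invoke compactness in $\BV^\px$ to ensure $u\in \BV^\px(\Omega;\mathbb R^m)$, and work with a weak-$*$ limit $\mu\in\mathcal M(\Omega)$ of the energy measures $\mu_h:=f(\nabla u_h)^{p(x)}\mathcal L^n\res\Omega$.

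For the lower bound, a Fonseca--M\"uller blow-up compares the Radon--Nikodym derivatives of $\mu$ with respect to $\mathcal L^n$ and $|D^s u|$. At a Lebesgue point $x_0$ with $p(x_0)>1$, log-H\"older continuity yields $p(x)\geq p(x_0)-C/\log(1/r)$ on balls $B_r(x_0)$, which upgrades the $p(\cdot)$-growth bound to a uniform $W^{1,p(x_0)-\varepsilon}$ estimate on the rescaled blow-ups; quasiconvexity \Hone{} then gives $\frac{d\mu}{d\mathcal L^n}(x_0)\geq f(\nabla u(x_0))^{p(x_0)}$. At a point $x_0$ of the singular part of $Du$ lying in $\{p(x_0)=1\}$, the stronger vanishing log-H\"older assumption lets the blow-up stay in a uniformly linear growth regime, and a standard rank-one/recession argument produces $\frac{d\mu}{d|D^s u|}(x_0)\geq f^\infty\!\left(\frac{dD^s u}{d|D^s u|}(x_0)\right)$. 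Coercivity \Htwo{} together with the strictly superlinear growth at points with $p(x_0)>1$ forces $|D^s u|\res\{p>1\}=0$ whenever $\mu(\Omega)<\infty$, so no contribution arises from singular points outside $\{p=1\}$.

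For the upper bound I would construct a recovery sequence in two coupled stages. On the open set $\{p>1\}$, a mollification of $u$ on scales compatible with log-H\"older continuity supplies an admissible sequence whose $p(\cdot)$-energy is controlled via Jensen's inequality, with residual error vanishing by the absolute continuity of $|Du|\res\{p>1\}$ derived above. Near and inside $\{p=1\}$, an Alberti rank-one adapted $\BV$ approximation by piecewise affine configurations realizing $f^\infty$ produces a sequence matching the singular term, in the spirit of the classical theory for $\BV$ and its recent $\BV^\phi$ extension. The gluing is carried out with a partition of unity subordinate to the super- and sublevel sets $\{p>1+\delta_k\}$ and $\{p<1+\delta_k\}$ with $\delta_k\downarrow 0$; the vanishing log-H\"older condition on $\{p=1\}$ ensures that both the Sobolev approximation and the singular construction survive in the $L^\px$ limit without generating a Lavrentiev-type gap across the interface.

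The principal obstacle, in my view, is exactly this interface: away from $\{p=1\}$ the standard $p(\cdot)$-relaxation applies, and away from $\{p>1\}$ the classical $\BV$ theory with recession applies, but the gluing must simultaneously suppress spurious bulk contributions from the singular construction and validate the blow-up estimates at singular points with $p(x_0)=1$. Quantitatively, the vanishing log-H\"older condition is designed to trade the exponent gap $p(x)-1$ against the decay of the modulus of continuity, and converting this qualitative statement into the uniform estimates needed in both the lower and the upper bound is where most of the technical effort will lie.
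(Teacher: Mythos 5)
Your plan follows the classical Fonseca--M\"uller blow-up template, while the paper takes a genuinely different route on both inequalities; it is worth spelling out the difference because the paper's choices are precisely what sidestep the ``principal obstacle'' you identify.

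For the lower bound, you propose a direct blow-up comparison of $d\mu/d\mathcal L^n$ and $d\mu/d|D^su|$, requiring a freezing-of-exponent argument at bulk points with $p(x_0)>1$ and a rank-one/recession blow-up at singular points with $p(x_0)=1$. The paper instead avoids blow-up entirely here: it approximates $t\mapsto t^{p(x)}$ from below by the affine-capped functions $\phi_j$, forming $\psi_j(x,\xi)=\phi_j(x,f(\xi))$, which have \emph{uniformly linear} growth, and applies the vectorial lower semicontinuity result of Breit--Diening--Gmeineder (Lemma~\ref{lem:lsc}) to each $\psi_j$. Monotone convergence in $j$ then delivers lower semicontinuity of $F_{qc}$ (Proposition~\ref{prop:Fqc}), and the lower bound of Theorem~\ref{thm:main} follows in one line. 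Your route would in principle work, but it requires carrying out the exponent-freezing and the $W^{1,p(x_0)-\varepsilon}$ uniformity estimates from scratch, exactly the type of argument the $\psi_j$-truncation is designed to bypass. Note also that only ordinary $\log$-H\"older is needed for the lower bound in the paper; your sketch seems to invoke vanishing $\log$-H\"older at singular points, which is not required for this direction.

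For the upper bound, your two-stage construction with a partition of unity subordinate to $\{p>1+\delta_k\}$ and $\{p<1+\delta_k\}$ is where the proposal is genuinely at risk. Gluing a Sobolev mollification and an ``Alberti rank-one adapted piecewise affine'' configuration creates exactly the transition-layer term you flag, and you give no mechanism to kill it. The paper does \emph{not} glue. It first proves (Proposition~\ref{prop:Fcalmeas}, via De Giorgi--Letta) that $\mathcal F(u,\cdot)$ is a Radon measure controlled by $V^m_{p(\cdot)}$, and then in Lemma~\ref{lem:ubrelaxation} uses a \emph{single} mollification $u_\delta=u*\eta_\delta$. The integral is split over $A\setminus Y^\delta$ and $Y^\delta\cap A$, and the key estimate is $f(\nabla u_\delta)^{p(x)}\le (c\delta^{-1})^{p(x)-1}f(\nabla u_\delta)\le e^{2\omega(\delta)}f(\nabla u_\delta)$ on $Y^\delta$, where $\omega(\delta)\to 0$ by strong $\log$-H\"older continuity. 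The exponent's effect thus collapses to a multiplicative factor $\to 1$, and the remaining linear-growth term is handled by Reshetnyak continuity (Lemma~\ref{lem:Reshetnyak}) plus the Lebesgue decomposition of the measure $\mathcal F(u,\cdot)$. No cutoffs in the target domain, no partitions of unity across the interface, and no separate ``singular construction'' are needed. Your description of the singular-part recovery sequence is too vague to assess, but as written your gluing step would require a quantitative decay of the energy in the annular transition regions $\{1+\delta_{k+1}<p<1+\delta_k\}$ that is not established and is nontrivial exactly because the exponent degenerates to $1$ there; this is the gap you would need to close, and it is precisely what the paper's single-mollification strategy avoids.
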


\begin{remark}\label{remfunct=}
If $A$  has Lipschitz boundary and if $f$ satisfies \Hthree{}, 
then 
\[
\mathcal F(u,A)
=
\inf\bigg\{\liminf_{h\to \infty}\int_A f(\nabla u_h)^{p(x)}\, dx \,\Big|\, u_h \in W^{1,p(\cdot)}(A;\R^m), u_h \to u \text{ in } L^1(A;\R^m)\bigg\}
\]
with convergence in $L^1$ instead of $L^\px$, by \cite[Proposition 2.8]{ElePZ24}.
\end{remark}

Our proof is based on approximation by linear growth functionals, for which we can use 
an improved version of the linear result due to Breit, Diening and Gmeinder \cite{BreDG20}, see Lemma~\ref{lem:lsc}. 
Based on this lemma we obtain in Proposition~\ref{prop:Fqc} a lower semi-continuity result 
which allows us to prove the inequality ``$\ge$'' in the main theorem. 
The proof of the opposite inequality uses the so-called blow-up method from 
\cite[Section~5.5]{AmbFP00}, which, as a first step, requires us to prove that $\mathcal F(u,\cdot)$ in \eqref{calF} is a Radon measure absolutely continuous with respect to the generalized total variation of $Du$, in the sense of \cite{EleHH25,HarHL08, HarHLT13}. 
In Proposition~\ref{prop:Fcalmeas} we refine techniques from \cite{HarHLT13} 
to show how the strong $\log$-Hölder continuity allows us 
to separate the effect of the exponent into a multiplicative constant which tends to $1$ in 
the blow-up process. The remaining linear growth, quasi-convex part is then handled as in \cite[Section~5.5]{AmbFP00}. Before the main result, we introduce background material 
in Section~\ref{sect:preliminaries} and the appropriate vector-valued BV-type space with variable growth 
in Section~\ref{sect:BV}.


\section{Preliminaries}\label{sect:preliminaries}

We always consider a bounded open set $\Omega$ in $\Rn$, $n \ge 2$. 
For a set $E\subset \Rn$, $\chi_E$ is the \textit{characteristic function} of $E$ such that $\chi_E(x)=1$ if $x\in E$ and $\chi_E(x)=0$ if $x\not\in E$. We denote the \emph{H\"older conjugate} exponent of $p \in [1,\infty]$ by $p'=\frac{p}{p-1}$. A generic constant denoted by $c>0$ without subscript may change between appearances.

Let $f, g : E \to \R$. The notation $f\lesssim g$ means that
there exists $C>0$ such that $f(y)\le Cg(y)$ all $y\in E$ and 
$f\approx g$ means that $f\lesssim g\lesssim f$.
When $E\subset \R$, we say that $f$ is \textit{almost increasing} on $E$ with constant 
$L\ge 1$ if $f(s)\le L f(t)$ whenever $s,t \in E$ with $s\le t$. If we can choose $L=1$, 
we say that $f$ is \textit{increasing} on $E$. \textit{Almost decreasing} and \textit{decreasing} are defined similarly. 

\subsection*{Generalized Orlicz spaces}

We introduce a general framework for function spaces following \cite{HarH19}. 
We say  that $\phi: \Omega\times [0, \infty] \to [0, \infty]$ is a 
\textit{weak $\Phi$-function}, and write $\phi \in \Phiw(\Omega)$, if 
the following conditions hold for every $x \in \Omega$:
\begin{itemize}
\item 
$\phi(\cdot, |f|)$ is measurable for every measurable function $f:\Omega\to \overline\R$.
\item
$t \mapsto \phi(x, t)$ is increasing. 
\item 
$\displaystyle \phi(x, 0) = \lim_{t \to 0^+} \phi(x,t) =0$ and $\displaystyle \phi(x,\infty)= \lim_{t \to \infty}\phi(x,t)=\infty$.
\item 
$t \mapsto \frac{\phi(x, t)}t$ is $L$-almost increasing on $(0,\infty)$ with 
constant $L$ independent of $x$.
\end{itemize}
A weak $\Phi$-function $\phi$ is a \textit{convex $\Phi$-function} if it is additionally convex and left-continuous with respect to second variable, denoted $\phi \in \Phic(\Omega)$.
For instance, $t \mapsto \frac{1}{p} t^{p}$, $t \mapsto  t^{p(x)}$ and $t \mapsto \frac{1}{p(x)} t^{p(x)}$ are  convex  $\Phi$-functions, where $p: \Omega \to [1, \infty)$ is measurable, whereas $t\mapsto \min\{t, t^2\}$ is a weak $\Phi$-function which is not convex.

Given $\phi \in \Phiw(\Omega)$, we define the \textit{quasimodular} of 
the measurable function $u:\Omega \to [- \infty, \infty]$ as
\[
\rho_\phi(u):=\int_\Omega \phi(x, |u(x)|)\,dx,
\]
and we write $u\in L^\phi(\Omega)$ if there exists $\lambda >0$ such that
$\rho_\phi(\lambda u)< \infty$.
The space $L^\phi(\Omega)$ is endowed with the quasinorm
\[
\|u\|_{L^\phi(\Omega)}
:=
\|u\|_{\phi}:=\inf\{\lambda >0 \,|\,  \rho_{\phi}(\tfrac u\lambda)\le 1\}.
\]
More information about generalized Orlicz spaces can be found from \cite{HarH19}.

The extension to the vector valued setting is the following. 
Let $\phi \in\Phiw(\Omega)$. 
A measurable function $u\equiv (u_1,\dots, u_m) \colon \Omega \to \R^m$ belongs to  $L^\phi(\Omega;\R^m)$  if
$u_\alpha \in L^{\phi}(\Omega)$ for every $\alpha = 1, \ldots, m$. 
For $u \colon \Omega \to \R^m$ we write
\begin{equation*}
\|u\|_{\phi}:= \big\| |u| \big\|_{\phi} < \infty,
\end{equation*}
and note that
\[
L^{\phi}(\Omega;\R^m) 
= 
\big\{u:\Omega \to \R^m \hbox{ measurable } \,\big|\, \rho_\phi(|\lambda  u|) < \infty \quad \textnormal{for some }\lambda > 0\big\}.
\] 

Next we define some standard conditions on the $\Phi$-functions that imply 
additional properties of the spaces.

\begin{definition}\label{def:phi-conditions}
Let $\phi : \Omega \times [0,\infty] \to [0,\infty]$ and let $p,q>0$. Then we define the following conditions:
\begin{itemize}
\item[(A0)]\label{def:a0}
There exists $\beta \in (0,1]$ such that $\phi(x,\beta) \le 1 \le \phi(x, \frac{1}{\beta})$ for every $x \in \Omega$.
\item[(A1)]\label{def:a1} 
For every $K>0$ there exists $\beta \in (0,1]$ such that, for every $x,y\in \Omega$,
\[
\phi(x,\beta t) \le \phi(y,t)+1 \quad \textnormal{when } \ \phi(y, t) \in \Big[0, \frac{K}{|x-y|^n}\Big].
\]
\item[(aInc)$_p$]\label{def:aInc}
There exists $L_p \ge 1$ such that $t \mapsto \frac{\phi(x,t)}{t^p}$ is $L_p$-almost 
increasing on $(0,\infty)$ for every $x \in \Omega$.
\item[(aDec)$_q$]\label{def:aDec} 
There exists $L_q \ge 1$ such that $t \mapsto \frac{\phi(x,t)}{t^q}$ is $L_q$-almost decreasing on $(0,\infty)$ for every $x \in \Omega$.
\end{itemize}
We say that \ainc{} or \adec{} holds if \ainc{}$_p$ or \adec{}$_q$ holds for some $p>1$ or $q<\infty$, respectively.
\end{definition}

Note that this slight variation of \aone{} is equivalent to that of \cite{HarH19} if \azero{} and \adec{} 
hold (see \cite[Section~3]{EleHH25}). 
Assumption \aone{} is an almost continuity condition whereas \ainc{} and 
\adec{} are quantitative versions of the $\nabla_2$ and $\Delta_2$ conditions 
from Orlicz space theory and measure lower and upper growth rates. 

We denote by $\phi^*$ the conjugate function of $\phi$, i.e. 
\[
\phi^*(x, t):= \sup_{s \ge 0}\{ st - \phi (x, s)\},
\] 
for $x \in \Omega$ and $t \in [0,\infty)$.
We define the associate space of the generalized Orlicz space $L^\phi(\Omega)$, 
a variant of the dual function space which 
works better at the end-points $p=1$ and $p=\infty$. 

\begin{definition}
\label{def:dual}
Let $\phi \in \Phiw(\Omega)$. The \emph{associate space $(L^\phi(\Omega))'$} 
is the subset of measurable functions with finite norm
\begin{equation*}
\|f\|_{(L^\phi(\Omega))'} := \sup_{\|g\|_\phi \le 1} \int_\Omega fg \,dx.
\end{equation*}
\end{definition}

For weak $\Phi$-functions the associate space 
equals the generalized Orlicz space $L^{\phi^*}(\Omega)$ \cite[Theorem 3.4.6]{HarH19}, 
up to equivalence of norms.

\subsection*{Variable exponent spaces}

A measurable function $p :\Omega \to [1,\infty)$ is called a \emph{variable exponent}.
We denote for $A \subset \Omega$,
\begin{align*}
p^+_A:= \esssup_{x \in A}p(x), \; p^-_A:= \essinf_{x \in A} p(x), \;  p^+:= p^+_\Omega, \; 
p^-:=p^-_\Omega.
\end{align*}
By the symbol $Y$  we denote
the set where $p$ equals one, 
\[
Y := \{x \in \Omega \mid p(x) = 1\}.
\]

Let $u: \Omega \to [-\infty, \infty]$. We recall that the variable exponent modular is defined as 
\begin{align*}
\rho_{L^\px(\Omega)}(u) := \rho_\px(u) := \int_\Omega \tfrac1{p(x)}|u(x)|^{p(x)}\,dx.
\end{align*}
This defines the \textit{variable exponent Lebesgue space $L^\px(\Omega;\R^m)$} as 
the special case of $L^\phi(\Omega;\R^m)$ with $\phi(x,t)=\frac1{p(x)}t^{p(x)}$. 
Consider Definition~\ref{def:phi-conditions} in this context. 
Now \azero{} and \ainc{p^-} always hold and \adec{p^+} holds if $p^+<\infty$
\cite[Lemma~7.1.1]{HarH19}. 
For \aone{}, we need the $\log$-Hölder continuity of the exponent \cite[Proposition~7.1.2]{HarH19}.

In the variable exponent case, the conjugate $\Phi$-function is related to the point-wise Hölder conjugate exponent \cite[Example 4.3]{EleHH25}: 
\begin{equation*} 
\phi^*(x,s)= \begin{cases}
\frac{1}{p'(x)}s^{p'(x)}, &\hbox{ if } x \in \Omega \setminus Y, \\
0, &\hbox{ if }x \in Y \text{ and }s\le 1, \\
\infty, &\hbox{ if }x \in Y \text{ and }s> 1. \\
\end{cases} 
\end{equation*}
We will abbreviate this function as $\frac{1}{p'(x)}s^{p'(x)}$ 
also in $Y$.

\begin{definition}\label{logH}
A variable exponent $p$ is \emph{$\log$-H\"older continuous} if there exists $C >0$ such that
\[
|p(x)-p(y)|\le \frac C {\log(e+ \frac{1}{|x - y|})}
\]
for all $x,y \in \Omega$.
It is \emph{strongly $\log$-H\"older continuous} if additionally
\[
p(x)-1 \le \frac{\omega(|x-y|)}{\log(e+ \frac{1}{|x - y|})},
\]
for some $\omega:[0,\infty)\to [0, \infty)$ with $\lim_{t \to 0}\omega(t)=0$ and every $y \in Y$ and $x\in\Omega$.
\end{definition}

A bounded exponent $p$ is $\log$-H\"older continuous in $\Omega$ if and only if there exists a constant $C >0$
such that
\[
\mathcal L^n(B)^{p^-_B-p^+_B} \le C
\]
for every ball $B \subset \Omega$ \cite[Lemma 4.1.6]{DieHHR11}. 
Under the $\log$-H\"older condition, smooth functions are dense in variable exponent
Sobolev spaces \cite[Theorem 9.1.8]{DieHHR11}. Strong $\log$-Hölder continuity 
\cite{HarHL08} is a generalization of vanishing $\log$-Hölder continuity, 
where the $\log$-Hölder constant is controlled in only 
the set $Y$ rather than in all of $\Omega$.

We will need the following technical lemma. Note that we do not  
need the $\log$-Hölder continuity in all of $\Omega$, only the stronger version in 
the set $Y$, so we say that a variable exponent $p$
\emph{ is strongly $\log$-H\"older continuous in $Y$} if
\[
p(x)-1 \le \frac{\omega(|x-y|)}{\log(e+ \frac{1}{|x - y|})},
\]
for some $\omega:[0,\infty)\to [0, \infty)$ with $\lim_{t \to 0}\omega(t)=0$ and every $y \in Y$ and $x\in\Omega$.

\begin{lemma}\label{lem:smallOnY}
Suppose that $p$ is strongly $\log$-Hölder continuous in $Y$. 
If $\rho_{p'(\cdot)}(w)<\infty$ for some continuous $w$, then 
$|w|\le 1$ in $Y$
\end{lemma}
\begin{proof}
Suppose that $|w(y_0)|>1$ for some $y_0\in Y$. Let $B\subset \Omega$ be 
a ball centered at $y_0$ such that $|w|>c_0>1$ in $B$. By the strong $\log$-Hölder continuity 
condition in $Y$, 
\[
p'(x) \ge \frac1{p(x)-1} \ge M \log(e+\frac1{|x-y_0|}),
\]
where $M$ can be made large by restricting $x$ to a small ball centered at $y$. 
Consequently,
\[
\int_\Omega |w|^{p'(x)}\, dx
\ge 
\int_B c_0^{M \log(e+\frac1{|x-y_0|})}\, dx
\ge
\int_B |x-y_0|^{-M \log c_0}\, dx,
\]
where in view of \cite[Lemma 3.1.6]{DieHHR11} the left hand side is equivalent to $\rho_{p'(\cdot)}(w)$. 
In a small ball centered at $y_0$, $M\log c_0 \ge n$ and the integral diverges.
Thus, if the integral converges, then $|w|\le 1$, as claimed.
\end{proof}

The \emph{variable exponent Sobolev space $W^{1,p(\cdot)}(\Omega)$} consists of functions $u \in L^{p(\cdot)}(\Omega)$ whose distributional gradient
$ Du$ belongs to $L^{p(\cdot)}(\Omega)$. The variable exponent Sobolev space $W^{1,p(\cdot)}(\Omega)$ is a Banach space with the norm
\[
\|u\|_{W^{1, \px}(\Omega)}:= \|u\|_{1, \px}:= \|u\|_{L^\px(\Omega)}+ \| D u\|_{L^\px(\Omega)}
\]
The norm of the latter term is the $L^{p(\cdot)}$-norm of the 
scalar-valued function $|Du|$, the Euclidean norm of $Du$, 
i.e.\ $\|Du\|_{L^\px(\Omega)} := \big\| |Du| \big\|_{L^\px(\Omega)}$.

\subsection*{Functions of linear growth}

We collect here some results about functions with linear growth that will be used in the 
proofs. 

The following is a special case of Proposition~5.1, \cite{BreDG20}, with $\mathbb A = D$. 
This special case was originally proved by Fonseca and M\"uller \cite{FonM93} with an additional 
assumption (H5), which we avoid by using the more general recent reference. 

\begin{lemma}[Weak lower semicontinuity]
\label{lem:lsc}
Let $\Omega$ be a bounded open set with Lipschitz boundary.
Assume that $f:\overline \Omega\times \R^m\to \R$ is continuous, 
$f(x, \cdot)$ satisfies \Hone{}, \Htwo{} and \Hthree{} uniformly in $x\in \overline\Omega$ 
and 
\[
|f(x,\xi)-f(y,\xi)| \le \omega(|x-y|)(1+|\xi|)
\] 
for some modulus of continuity $\omega$. 
Then $F_{qc}^1 : \BV(\Omega;\R^m) \to [0, \infty)$, defined as 
\[
F_{qc}^1(u)
:=
\int_\Omega f(x, \nabla u)\,dx + \int_\Omega  f^\infty_s\Big(x, \frac{d D^s u}{d |D^s u|}\Big)\, d |D^s u|,
\]
is sequentially weakly* lower semicontinuous with respect to $\BV(\Omega;\R^m)$-convergence, 
where $f^\infty_s$ is the \emph{strong recession function}:
\[
f^\infty_s (x, \xi) := 
\lim_{\substack{t\to \infty\\\xi'\to \xi\\ x' \to x}} \frac{f(x', t\xi')}{t}.
\]
\end{lemma}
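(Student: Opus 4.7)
The plan is to adapt the Fonseca--Müller blow-up method to this mildly $x$-dependent setting, with the continuity hypothesis with modulus $\omega$ replacing the usual (H5)-type assumption that is avoided in the Breit--Diening--Gmeinder reference. Without loss of generality I would restrict to a subsequence (not relabeled) that realises the $\liminf$ and is bounded: thanks to \Htwo{} the bound gives $\sup_h |Du_h|(\Omega)<\infty$, so by $\BV$-compactness combined with the $L^1$-convergence $u_h\to u$ one obtains $u_h\rightharpoonup^\ast u$ weakly$^\ast$ in $\BV(\Omega;\R^m)$. I would then introduce the positive Radon measures
\[
\mu_h := f(\cdot,\nabla u_h)\,\mathcal L^n,
\]
which by \Hthree{} have uniformly bounded total variation, and extract a further subsequence with $\mu_h\rightharpoonup^\ast \mu$ on $\overline\Omega$. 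The task reduces to proving $\mu(\Omega)\ge F^1_{qc}(u)$.

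Next I would decompose $\mu=\mu^a+\mu^s$ with respect to $\lambda:=\mathcal L^n+|D^s u|$ via the Besicovitch differentiation theorem, so that the desired inequality follows from the two pointwise Radon--Nikodym bounds
\[
\tfrac{d\mu}{d\mathcal L^n}(x_0)\ge f\big(x_0,\nabla u(x_0)\big)\quad\text{for $\mathcal L^n$-a.e.\ }x_0\in\Omega,
\]
\[
\tfrac{d\mu}{d|D^s u|}(x_0)\ge f^\infty_s\Big(x_0,\tfrac{dD^s u}{d|D^s u|}(x_0)\Big)\quad\text{for $|D^s u|$-a.e.\ }x_0\in\Omega.
\]
Both are obtained through a rescaling argument: fix a regular point $x_0$ and a radius $r$ with $\mu(\partial B_r(x_0))=0$, consider the rescalings $v^{h,r}(y):=r^{-1}(u_h(x_0+ry)-\ell(y))$ on the unit ball, with $\ell$ an affine profile tangent to $u$ at $x_0$, and use quasiconvexity of $f(x_0,\cdot)$ on a cut-off of $v^{h,r}$ chosen so that the boundary values agree with those of $\ell$. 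The crucial replacement in the $x$-dependent case is the continuity estimate $|f(x,\xi)-f(x_0,\xi)|\le \omega(r)(1+|\xi|)$, which absorbs the $x$-dependence into an error term $\omega(r)(|B_r|+|Du_h|(B_r))$ that vanishes as $r\to 0$.

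For the absolutely continuous part, the standard Lebesgue and differentiation points of $u$ together with quasiconvexity of $f(x_0,\cdot)$ and the blow-up argument above directly produce the bound. For the singular part, I would use that $|D^s u|$-a.e.\ $x_0$ is a point where the blow-ups of $u$ concentrate along a fixed rank-one direction $\xi_0=\tfrac{dD^s u}{d|D^s u|}(x_0)$ and the mass $|D^s u|(B_r(x_0))/r^{n-1}$ diverges; rescaling by this mass rather than by $r$ and combining quasiconvexity with the definition of $f^\infty_s$, which is by hypothesis a \emph{strong} (full two-sided) limit jointly in $(x',\xi')$, yields $f^\infty_s(x_0,\xi_0)$ in the limit.

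The main obstacle is the singular part of the bound: the blow-up at a singular point does not converge to a simple profile, so one must use the joint continuity of $f^\infty_s$ in the $(x,\xi)$ variables to recover the recession integrand. It is precisely this feature which, in the reference, allows the weaker (H5)-free continuity in $x$ to suffice in place of the original Fonseca--Müller hypothesis, and would be the delicate step to reproduce. Once both Radon--Nikodym inequalities are established, summation over $\Omega$ gives $\mu(\Omega)\ge F^1_{qc}(u)$, and since $\liminf_h F^1_{qc}(u_h)\ge \mu(\Omega)$ by weak$^\ast$ convergence of $\mu_h$ on the open set $\Omega$, the lemma follows.
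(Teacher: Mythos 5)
The paper offers no standalone proof of Lemma~\ref{lem:lsc}: it is dispatched entirely by citation to \cite[Proposition~5.1]{BreDG20} (the case $\mathbb A = D$), which sharpens the original Fonseca--M\"uller theorem by dropping the coercivity hypothesis (H5). Your sketch, by contrast, reconstructs a Fonseca--M\"uller-style blow-up proof, so the comparison is citation versus re-derivation; the underlying mechanism (Besicovitch differentiation of limiting measures plus two Radon--Nikodym lower bounds) is indeed what both \cite{FonM93} and \cite{BreDG20} do, so the strategy is the right one.

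That said, as a proof the sketch leaves the genuinely hard part unproved. Concretely: (a) you need Alberti's rank-one theorem to know that $\xi_0 = \tfrac{dD^su}{d|D^su|}(x_0)$ is a rank-one matrix at $|D^su|$-a.e.~$x_0$, and this is what makes quasiconvexity bite at singular points; it should be invoked explicitly. (b) The singular-part bound is not just a matter of ``rescaling by the mass and combining quasiconvexity with the strong recession limit.'' At Cantor points there is no canonical blow-up profile and one must compare against piecewise-affine (or one-dimensional) profiles after a careful normalization by $|Du|(B_r)$; at jump points one must cut off to match affine boundary data without generating energy on the boundary annulus. These truncation/matching estimates are precisely where (H5) entered in \cite{FonM93} and where \cite{BreDG20} improves the argument, so asserting that ``joint continuity of $f^\infty_s$'' closes the gap is a restatement of the difficulty, not its resolution. (c) Quasiconvexity in \Hone{} is stated for $W^{1,\infty}_0$ competitors, while the blow-up fields $v^{h,r}$ and their cut-offs are only $\BV$; one needs a truncation/mollification step before \Hone{} applies, and this should be spelled out. (d) Minor: your measures $\mu_h = f(\cdot,\nabla u_h)\,\mathcal L^n$ drop the singular term $f^\infty_s\bigl(\cdot,\tfrac{dD^su_h}{d|D^su_h|}\bigr)\,|D^su_h|$; this is harmless because $f^\infty_s\ge 0$ under \Htwo{}, but it is worth noting this explicitly, since including it gives the exact identity $F^1_{qc}(u_h)=\mu_h(\Omega)$ and removes a potential source of confusion. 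Also $|D^su|(B_r)/r^{n-1}$ need not diverge at $|D^su|$-a.e.~point (it is bounded on the jump set); the divergent ratio at a.e.~singular point is $|D^su|(B_r)/r^n$. With these points addressed, the sketch becomes a legitimate proof along the lines the paper delegates to \cite{BreDG20}.
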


\begin{lemma}[Reshetnyak continuity, Theorem~2.38, \cite{AmbFP00}]\label{lem:Reshetnyak}
Let $\Omega\subset \R^n$ be open and $\mu$ and $\mu_h$ be $\R^m$-valued finite Radon measures in 
$\Omega$. If $\mu_h \to \mu$ weakly* in $\Omega$ and $|\mu_h|(\Omega)\to |\mu|(\Omega)$, 
then
\[
\lim_{h\to\infty}\int_\Omega f\Big(x, \frac{\mu_h}{|\mu_h|}\Big)\, d|\mu_h|
=
\int_\Omega f\Big(x, \frac{\mu}{|\mu|}\Big)\, d|\mu|
\]
provided $f:\Omega\times S^{m-1}\to \R$ is continuous and bounded, 
where $S^{m-1}\subset \R^m$ is the unit sphere.
\end{lemma}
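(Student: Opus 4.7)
The plan is to lift the measures $\mu_h$ to the product space $\Omega \times S^{m-1}$ and identify the weak* limit of a pushforward sequence. I would first introduce the nonnegative Radon measures $\sigma_h$ on $\Omega \times S^{m-1}$ defined by
\[
\int F \, d\sigma_h := \int_\Omega F\Big(x, \tfrac{d\mu_h}{d|\mu_h|}(x)\Big) \, d|\mu_h|(x), \quad F \in C_b(\Omega \times S^{m-1}),
\]
so that the target identity reads $\int f \, d\sigma_h \to \int f \, d\sigma$ with $\sigma := |\mu| \otimes \delta_{\mu/|\mu|}$. The total masses $\sigma_h(\Omega \times S^{m-1}) = |\mu_h|(\Omega)$ are uniformly bounded by hypothesis.

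Next, I would exploit the norm convergence $|\mu_h|(\Omega) \to |\mu|(\Omega)$ together with $\mu_h \rightharpoonup^* \mu$ to conclude that $|\mu_h| \rightharpoonup^* |\mu|$ as nonnegative Radon measures (a standard consequence: lower semicontinuity of the total variation under weak* convergence gives one inequality, and equality of total masses precludes escape of mass on subsets). Combined with the uniform bound, this yields tightness of $\{\sigma_h\}$ on $\Omega \times S^{m-1}$, so by Banach--Alaoglu a subsequence (not relabeled) converges weakly* to some $\sigma_\infty$.

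The crux is to identify $\sigma_\infty = \sigma$. Disintegrate $\sigma_\infty$ against its first marginal $|\mu|$ to obtain probability measures $\{\sigma_\infty^x\}$ on $S^{m-1}$. Testing $\sigma_h \rightharpoonup^* \sigma_\infty$ against the product function $F(x,\theta) = g(x) \langle a, \theta\rangle$ with $g \in C_c(\Omega)$ and $a \in \R^m$, the left side equals $\langle a, \int g\, d\mu_h\rangle \to \langle a, \int g\, d\mu\rangle$ by the weak* hypothesis on $\mu_h$, while the right side equals $\int g(x) \bigl\langle a, \int_{S^{m-1}} \theta\, d\sigma_\infty^x\bigr\rangle \, d|\mu|(x)$. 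Arbitrariness of $g$ and $a$ forces $\int \theta\, d\sigma_\infty^x = \tfrac{d\mu}{d|\mu|}(x)$ for $|\mu|$-a.e.\ $x$. Since the barycenter of a probability measure on $S^{m-1}$ has Euclidean norm $1$ only when the measure is a Dirac mass (strict convexity of the closed Euclidean ball), we deduce $\sigma_\infty^x = \delta_{d\mu/d|\mu|(x)}$ and hence $\sigma_\infty = \sigma$.

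The main obstacle is the second step: securing tightness of $\{\sigma_h\}$ on $\Omega \times S^{m-1}$ is exactly where the norm condition $|\mu_h|(\Omega)\to |\mu|(\Omega)$ is indispensable, since otherwise mass could concentrate near $\partial\Omega$ in the limit and spoil the identification. Once $\sigma_\infty = \sigma$ is in hand, applying the weak* convergence to the continuous bounded test function $f$ yields the desired limit along the chosen subsequence; independence of the limit value from the subsequence then promotes this to convergence of the full sequence.
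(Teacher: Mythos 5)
The paper states this lemma as a citation to \cite[Theorem~2.38]{AmbFP00} and does not include a proof of its own. Your argument is correct and is essentially the standard proof from that reference: lift $|\mu_h|$ to $\Omega\times S^{m-1}$ via the Gauss-type map, use the mass convergence to obtain narrow convergence of $|\mu_h|$ (hence tightness of the lifts), pass to a weak* limit, and identify its disintegration as a Dirac field by strict convexity of the Euclidean ball once the barycenter is pinned down to $d\mu/d|\mu|$. The only point worth making explicit is that the first marginal of $\sigma_\infty$ equals $|\mu|$ because taking marginals commutes with narrow limits, which is exactly what legitimizes disintegrating against $|\mu|$ before invoking the barycenter constraint.
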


\section{BV-type spaces}\label{sect:BV}

Following \cite{AmbFP00}, we say that a function $u \in L^1(\Omega)$ has a \emph{bounded variation}, 
denoted $u \in \BV(\Omega)$, if 
\[
V(u,\Omega):= \sup\bigg\{\int_\Omega u  \div w \,dx \,\Big|\, w \in C^1_0(\Omega;\R^m), |w|\le 1\bigg\}< \infty.
\]
Such functions have distributional first derivatives $Du$ which are Radon measures 
\cite[Section 3.1]{AmbFP00}
and $V(u,\Omega)$ equals the total variation $|Du|(\Omega)$ of the measure $Du$ \cite[Proposition 3.6]{AmbFP00}. 

In the sequel, we
use the Lebesgue decomposition
\[Du = D^au + D^su,
\]
where $D^au$ is the absolutely continuous part of the derivative and $D^su$ is the singular part 
(with respect to the Lebesgue measure). The
density of $D^a u$ is the vector valued function $\nabla u$ such that
\[
\int_\Omega w \cdot  d D^a u= \int_\Omega w \cdot \nabla u \,dx,
\]
for all $w \in  C^\infty_0 (\Omega;\mathbb R^n)$. In the sequel we will identify $D^a u$ with $\nabla u$.

We generalize to the vector-valued case a formulation for BV-type spaces introduced in \cite{EleHH25}.
When $m=1$, these spaces reduce to $BV^\phi(\Omega):=BV^\phi(\Omega; \R)$ 
considered in \cite{EleHH25} and when $\phi(t)=t$ they reduce to ordinary BV-spaces.

%
%

\begin{definition}\label{defn:V-rho_m}
Let $m \in \mathbb N$, $\phi \in \Phiw(\Omega)$ and $u \in L^1_\loc(\Omega;\R^m)$. 
We define the ``dual norm'' 
\begin{align*}
V_\phi^m(u,\Omega):= V^m_\phi(u):= 
\sup\Bigg\{\sum_{\alpha=1}^m\int_\Omega u_\alpha \div w_\alpha \,dx \,\Big|\, w \in [C^1_0(\Omega;\R^n)]^{m}, \|w\|_{\phi^*}\le 1\Bigg\},
\end{align*}
and the ``dual modular''
\begin{align*}
\rho_{V,\phi}^m(u):=\sup\Bigg\{ \int_\Omega \bigg(\sum_{\alpha=1}^m u_\alpha \div w_\alpha - \phi^*(x, |w|)\,\bigg) \,dx \,\Big|\, w \in [C^1_0(\Omega; \R^n)]^m\Bigg\}.
\end{align*}
We say that $u\in L^\phi(\Omega;\R^m)$ belongs to $\BV^\phi(\Omega;\R^m)$ if
\[
\|u\|_{\BV^{\phi}(\Omega;\R^m)}:= \|u\|_{L^\phi(\Omega;\R^m)} + V_\phi^m(u, \Omega) < \infty.
\]
If $\phi(x, t) = \frac1{p(x)} t^{p(x)}$, then we replace $\phi$ by $\px$ in the  notation, for example $V_{\px}^m(u,\Omega)$.
\end{definition}

\begin{lemma}\label{BVvectorial}
If $\phi \in \Phiw(\Omega)$, then 
$u \equiv(u_1,\dots, u_m)\in \BV^\phi(\Omega;\R^m)$ if and only if $u_i \in \BV^\phi(\Omega)$, for every $i \in \{1,\dots, m\}$.
\end{lemma}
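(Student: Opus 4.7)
The plan is to separate the two defining conditions of $\BV^\phi(\Omega;\R^m)$ and verify each component-wise equivalence independently. Recall that $u \in \BV^\phi(\Omega;\R^m)$ means $u \in L^\phi(\Omega;\R^m)$ together with $V_\phi^m(u,\Omega)<\infty$. The first condition, by the definition recorded just before Definition~\ref{defn:V-rho_m}, is already component-wise: $u \in L^\phi(\Omega;\R^m)$ iff each $u_\alpha \in L^\phi(\Omega)$, since $\|u\|_\phi=\||u|\|_\phi$ and $|u_\alpha|\le |u|\le \sum_\beta |u_\beta|$ combined with monotonicity of $\phi$ in its second argument yields the two-sided control $\|u_\alpha\|_\phi \le \|u\|_\phi \le \sum_\beta \|u_\beta\|_\phi$. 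So the whole argument reduces to showing
\[
V_\phi^m(u,\Omega)<\infty \iff V_\phi^1(u_\alpha,\Omega)<\infty \text{ for every } \alpha\in\{1,\dots,m\}.
\]

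For the implication ``$\Rightarrow$'' I would test with vector fields supported in a single slot: given $\alpha$ and any $\tilde w \in C^1_0(\Omega;\R^n)$ with $\|\tilde w\|_{\phi^*}\le 1$, define $w = (w_1,\dots,w_m)$ by $w_\alpha=\tilde w$ and $w_\beta=0$ for $\beta\ne\alpha$. Then $|w|=|\tilde w|$, hence $\|w\|_{\phi^*}\le 1$ as well, and $w$ is admissible in the supremum defining $V_\phi^m(u,\Omega)$. Since only the $\alpha$-th term survives, one obtains $\int_\Omega u_\alpha \div \tilde w\, dx \le V_\phi^m(u,\Omega)$, and taking the supremum over $\tilde w$ gives $V_\phi^1(u_\alpha,\Omega)\le V_\phi^m(u,\Omega)<\infty$.

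For the reverse ``$\Leftarrow$'', start from any $w=(w_1,\dots,w_m)\in [C^1_0(\Omega;\R^n)]^m$ with $\|w\|_{\phi^*}\le 1$. The pointwise bound $|w_\alpha(x)|\le |w(x)|$ together with the fact that $\phi^*\in\Phiw(\Omega)$ is increasing in the second variable (so the Luxemburg (quasi)norm is monotone with respect to pointwise absolute value) gives $\|w_\alpha\|_{\phi^*}\le \|w\|_{\phi^*}\le 1$. Therefore each $w_\alpha$ is admissible in the scalar supremum defining $V_\phi^1(u_\alpha,\Omega)$, and summing over $\alpha$,
\[
\sum_{\alpha=1}^m \int_\Omega u_\alpha \div w_\alpha\, dx \le \sum_{\alpha=1}^m V_\phi^1(u_\alpha,\Omega).
\]
Taking the supremum over admissible $w$ yields $V_\phi^m(u,\Omega)\le \sum_{\alpha=1}^m V_\phi^1(u_\alpha,\Omega)$, which is finite by hypothesis.

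There is no real obstacle here; the only point requiring a moment's care is that the vector-valued norm $\|w\|_{\phi^*}$ is defined through the scalar function $|w|$, so that the pointwise inequality $|w_\alpha|\le |w|$ transfers to the norms via monotonicity of $\phi^*$ in its second argument. Both steps together produce the two-sided estimate
\[
\max_\alpha V_\phi^1(u_\alpha,\Omega)\;\le\; V_\phi^m(u,\Omega)\;\le\; \sum_{\alpha=1}^m V_\phi^1(u_\alpha,\Omega),
\]
from which the stated equivalence follows immediately.
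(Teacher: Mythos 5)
Your proof is correct and takes essentially the same route as the paper's: testing $V_\phi^m$ with single-slot vector fields for one direction and using the pointwise bound $|w_\alpha|\le |w|$ (so $\|w_\alpha\|_{\phi^*}\le\|w\|_{\phi^*}$) for the other. The only minor difference is that you re-derive the component-wise characterization of $L^\phi(\Omega;\R^m)$ from pointwise inequalities, whereas in the paper that is literally the stated definition of the vector-valued space, so no argument is needed there.
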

\begin{proof}
By \cite[Theorem 2.5.10 and Corollary 3.2.5]{HarH19}, we first observe regarding the Lebesgue 
norms that $\|u\|_{\phi} \lesssim \sum_{i=1}^m \|u_i\|_{\phi}$, while $\|u_i\|_{\phi} \lesssim \|u\|_{\phi}$, for every $i=1,\dots, m$. 
Thus it remains to compare the variations. 

Assume first that $u\equiv(u_1,\dots, u_m) \in \BV^\phi(\Omega;\R^m)$.
Let $w_i \in C^1_0(\Omega;\mathbb R^n)$ with $\|w_i\|_{\phi*}\le 1$. 
Define $w :=(0,\dots, 0, w_i, 0, \ldots, 0)$ and note that $w \in [C^1_0(\Omega;\mathbb R^n)]^m$ 
with $\|w\|_{\phi^*}= \|w_i\|_{\phi^*}\le 1$. Then 
\[
\int_\Omega u_i \div w_i \,dx = \int_\Omega \sum_{k=1}^m u_k \div w_k \,dx \le V^m_\phi(u,\Omega).
\]
Taking the supremum over such $w_i$, we find that 
\[
V_\phi(u_i,\Omega)\le V^m_\phi(u,\Omega),
\]
for every $i\in \{1,\dots, m\}$, which proves that $u_i\in \BV^\phi(\Omega)$.

Assume then that $u_i \in \BV^\phi(\Omega)$ for every $i\in \{1,\dots, m\}$ and choose 
$w \in [C^1_0(\Omega;\mathbb R^n)]^m$ with $\|w\|_{\phi*}\le 1$. 
Then each component satisfies $w_i \in C^1_0(\Omega;\mathbb R^n)$ and 
$\|w_i\|_{\phi*}\le 1$ and it follows from the definitions of $V_\phi$ that
\[
\int_\Omega \sum_{i=1}^m u_i \div w_i \,dx \le \sum_{i=1}^m V_\phi(u_i).
\]
Taking the supremum over such $w$, we find that 
\[
V^m_\phi(u,\Omega) \le \sum_{i=1}^m V_\phi(u_i,\Omega),
\]
which proves that $u\equiv(u_1,\dots, u_m) \in \BV^\phi(\Omega;\R^m)$.
\end{proof}

The following lemmas are vector-valued versions of results from \cite{EleHH25}. 
The proofs of Lemmas~\ref{asLemma4.6EleHH25} and 
\ref{lem:density0} are essentially identical and are thus omitted altogether (cf.\ \cite[Lemmas~4.6 and 5.4]{EleHH25}). For the others we show the changed parts.

\begin{lemma}\label{asLemma4.6EleHH25}
If $\phi \in \Phiw(\Omega)$, then $V_\phi^m$ is a seminorm 
and $\|\cdot\|_{\BV^\phi(\Omega;\R^m)}$ is a quasinorm in $\BV^\phi(\Omega;\R^m)$. 
Moreover, if $\phi \in \Phic(\Omega)$, then $\|\cdot\|_{\BV^\phi(\Omega;\R^m)}$ is a norm.
\end{lemma}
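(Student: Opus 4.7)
The plan is to verify the seminorm axioms for $V_\phi^m$ directly from its definition as a dual quantity, and then combine this with the already-known (quasi)norm properties of the Luxembourg (quasi)norm $\|\cdot\|_\phi$ from \cite{HarH19} to conclude. All three properties of $V_\phi^m$ come for free from the fact that the test functional $w \mapsto \sum_\alpha \int u_\alpha \div w_\alpha\,dx$ is linear in $u$ and the admissible set $\{w \in [C^1_0(\Omega;\R^n)]^m : \|w\|_{\phi^*} \le 1\}$ is symmetric under $w\mapsto -w$.

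Concretely, I would first note $V_\phi^m(u) \ge 0$ by choosing $w \equiv 0$ in the supremum. For absolute homogeneity, given $\lambda\in\R$ and $w$ with $\|w\|_{\phi^*}\le 1$, the vector field $\mathrm{sgn}(\lambda)w$ is still admissible, so
\[
\sum_{\alpha=1}^m\int_\Omega (\lambda u_\alpha)\div w_\alpha \,dx = |\lambda|\sum_{\alpha=1}^m\int_\Omega u_\alpha \div(\mathrm{sgn}(\lambda)w_\alpha) \,dx \le |\lambda|\,V_\phi^m(u),
\]
and taking the sup over $w$ yields $V_\phi^m(\lambda u) \le |\lambda|V_\phi^m(u)$; the reverse inequality is obtained symmetrically (treating $\lambda=0$ trivially). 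For subadditivity, linearity of the integrand in $u$ gives, for any admissible $w$,
\[
\sum_{\alpha=1}^m\int_\Omega (u_\alpha+v_\alpha)\div w_\alpha\,dx \le V_\phi^m(u)+V_\phi^m(v),
\]
and the sup over $w$ produces $V_\phi^m(u+v)\le V_\phi^m(u)+V_\phi^m(v)$.

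To upgrade to a quasinorm on $\BV^\phi(\Omega;\R^m)$, I would combine the seminorm property of $V_\phi^m$ with the quasinorm property of $\|\cdot\|_\phi$, which holds on $L^\phi(\Omega;\R^m)$ for $\phi\in\Phiw(\Omega)$ (see \cite[Chapter~3]{HarH19}): if $K\ge 1$ denotes the constant in the quasi-triangle inequality for $\|\cdot\|_\phi$, then
\[
\|u+v\|_{\BV^\phi} \le K\bigl(\|u\|_\phi+\|v\|_\phi\bigr) + V_\phi^m(u)+V_\phi^m(v) \le K\bigl(\|u\|_{\BV^\phi}+\|v\|_{\BV^\phi}\bigr).
\]
Positive definiteness is immediate since $\|u\|_{\BV^\phi}=0$ forces $\|u\|_\phi=0$, hence $u=0$ a.e.; homogeneity follows from the two pieces. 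The only mild subtlety is keeping track of the quasinorm constant, which is inherited solely from $\|\cdot\|_\phi$, since $V_\phi^m$ is an actual seminorm (constant $1$).

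Finally, for $\phi\in\Phic(\Omega)$, the Luxembourg norm $\|\cdot\|_\phi$ is a genuine norm on $L^\phi(\Omega;\R^m)$ by \cite[Theorem~3.2.7]{HarH19}, so the triangle inequality holds with $K=1$ and one obtains a bona fide norm. I do not expect any real obstacle here: the scalar case is \cite[Lemma~4.6]{EleHH25}, and the only point requiring care is to exploit Lemma~\ref{BVvectorial} implicitly through the component-wise structure, so that the supremum defining $V_\phi^m$ interacts correctly with the vectorial test fields $w = (w_1,\dots,w_m) \in [C^1_0(\Omega;\R^n)]^m$; the argument above proceeds without needing this reduction explicitly.
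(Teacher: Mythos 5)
Your proof is correct and follows essentially the same route the paper intends: the paper omits this proof altogether, pointing to \cite[Lemma~4.6]{EleHH25}, which proceeds by exactly this direct verification of the seminorm axioms from linearity and symmetry of the admissible test set, followed by combination with the known (quasi)norm properties of the Luxemburg quasinorm on $L^\phi$. The only point worth flagging is cosmetic: in the quasi-triangle estimate you should make explicit that the passage to $K\bigl(\|u\|_{\BV^\phi}+\|v\|_{\BV^\phi}\bigr)$ uses $K\ge 1$ to absorb the $V_\phi^m$ terms.
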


The next result shows that $\rho^m_{V,\phi}$ is a 
left-continuous semimodular in $L^1(\Omega;\R^m)$. 
The proof follows \cite[Lemma~4.7]{EleHH25}. 

\begin{lemma}\label{lem:pseudo-modular}
If $\phi\in\Phiw(\Omega)$, then 
\begin{enumerate}
\item $\rho^m_{V,\phi}(0)=0$;
\item the function $\lambda\mapsto\rho^m_{V,\phi}(\lambda u)$ is increasing on $[0,\infty)$ for every $u\in L^1(\Omega;\R^m)$;
\item $\rho^m_{V,\phi} (-u)=\rho^m_{V,\phi}(u)$ for every $u\in L^1(\Omega;\R^m)$;
\item  $\rho^m_{V,\phi}(\theta u +(1-\theta)v ) \leqslant \theta \rho^m_{V,\phi}(u) + 
(1-\theta)\rho^m_{V,\phi}(v)$ 
for every $u,v\in L^1(\Omega;\R^m)$ and every $\theta \in [0,1] $;
\item $\lim_{\lambda \to 1^-} \rho^m_{V,\phi}(\lambda u) = \rho^m_{V,\phi}(u)$ for every $u\in L^1(\Omega;\R^m)$.
\end{enumerate}
\end{lemma}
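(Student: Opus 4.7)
The plan is to adapt the scalar arguments of \cite[Lemma~4.7]{EleHH25} component by component; each of the five properties follows from direct manipulations of the supremum defining $\rho^m_{V,\phi}$. I would handle them in the order (1), (3), (4), (2), (5), since monotonicity (2) is most naturally deduced as a consequence of (1) and (4), and left-continuity (5) is the only genuinely limiting argument.

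For (1), picking the test vector $w\equiv 0$ gives the value $0$ in the supremum, while for any other $w\in[C^1_0(\Omega;\R^n)]^m$ the integrand $-\phi^*(x,|w|)$ is pointwise nonpositive (because $\phi^*\ge 0$); thus the supremum equals $0$. In particular, the same observation shows $\rho^m_{V,\phi}(u)\ge 0$ for every $u\in L^1(\Omega;\R^m)$, which I will use in (2). For (3), note that replacing $w$ by $-w$ in the supremum gives a bijection on $[C^1_0(\Omega;\R^n)]^m$ that preserves $|w|$ and therefore $\phi^*(x,|w|)$, while flipping the sign of $\sum_\alpha(-u_\alpha)\div(-w_\alpha)=\sum_\alpha u_\alpha\div w_\alpha$. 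Hence the two suprema coincide.

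For the convexity (4), I would fix $\theta\in[0,1]$, $u,v\in L^1(\Omega;\R^m)$ and any admissible $w$, and split the integrand as
\[
\int_\Omega\!\Big(\textstyle\sum_\alpha(\theta u_\alpha+(1-\theta)v_\alpha)\div w_\alpha-\phi^*(x,|w|)\Big)dx
=\theta I(u,w)+(1-\theta)I(v,w),
\]
where $I(\cdot,w)$ denotes the integrand appearing in the definition of $\rho^m_{V,\phi}$; the trick is that the $\phi^*$-term can be distributed as $\theta\int\phi^*+(1-\theta)\int\phi^*$. Bounding each term by the supremum in $w$ and then taking the supremum on the left yields (4). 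For (2), given $0\le\lambda_1\le\lambda_2$ with $\lambda_2>0$, write $\lambda_1 u=\tfrac{\lambda_1}{\lambda_2}(\lambda_2 u)+(1-\tfrac{\lambda_1}{\lambda_2})\cdot 0$; convexity together with (1) and $\rho^m_{V,\phi}\ge 0$ gives $\rho^m_{V,\phi}(\lambda_1 u)\le\tfrac{\lambda_1}{\lambda_2}\rho^m_{V,\phi}(\lambda_2 u)\le\rho^m_{V,\phi}(\lambda_2 u)$.

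The main obstacle is (5), where the supremum structure prevents a straightforward limit exchange. The upper bound $\limsup_{\lambda\to 1^-}\rho^m_{V,\phi}(\lambda u)\le\rho^m_{V,\phi}(u)$ is immediate from the monotonicity in (2). For the lower bound, if $\rho^m_{V,\phi}(u)<\infty$ I fix $\delta>0$ and choose $w\in[C^1_0(\Omega;\R^n)]^m$ with $\int_\Omega\phi^*(x,|w|)\,dx<\infty$ (any $w$ making the integrand $-\infty$ is not competitive) such that
\[
\int_\Omega \textstyle\sum_\alpha u_\alpha\div w_\alpha\,dx-\int_\Omega\phi^*(x,|w|)\,dx>\rho^m_{V,\phi}(u)-\delta.
\]
Since $w\in[C^1_0(\Omega;\R^n)]^m$ is fixed and $u\in L^1(\Omega;\R^m)$, the integral $\int_\Omega\sum_\alpha u_\alpha\div w_\alpha\,dx$ is finite, so
\[
\rho^m_{V,\phi}(\lambda u)\ge\lambda\!\int_\Omega\!\textstyle\sum_\alpha u_\alpha\div w_\alpha\,dx-\!\int_\Omega\!\phi^*(x,|w|)\,dx\xrightarrow[\lambda\to 1^-]{}\rho^m_{V,\phi}(u)-\delta,
\]
and letting $\delta\to 0$ completes the argument. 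If $\rho^m_{V,\phi}(u)=\infty$, the same scheme applied with an arbitrarily large constant $M$ in place of $\rho^m_{V,\phi}(u)-\delta$ shows $\liminf_{\lambda\to 1^-}\rho^m_{V,\phi}(\lambda u)=\infty$, closing the proof.
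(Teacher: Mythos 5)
Your proof is correct and follows essentially the same route as the paper for parts (1), (3), (4), and (5) — identifying $w=0$ as optimal for $u=0$, the $w\mapsto -w$ bijection, splitting $\phi^*=\theta\phi^*+(1-\theta)\phi^*$ for convexity, and the quantifier-juggling with a near-optimal $w$ for left-continuity. The one genuine departure is in (2): you derive monotonicity from (1), (4), and $\rho^m_{V,\phi}\ge 0$ via the decomposition $\lambda_1 u=\tfrac{\lambda_1}{\lambda_2}(\lambda_2 u)+(1-\tfrac{\lambda_1}{\lambda_2})\cdot 0$, whereas the paper proves it directly by rescaling the test function $w\mapsto\lambda w$ and using that $\phi^*$ is increasing in its second argument. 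Both are correct and equally short; yours has the mild aesthetic advantage of not re-invoking a structural property of $\phi^*$, while the paper's keeps (2) self-contained and independent of (4).
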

\begin{proof}
The proofs of properties (1) and (3) coincide verbatim with \cite[Lemma~4.7]{EleHH25}, 
so we omit them.

To show that $\lambda \mapsto \rho^m_{V, \phi}(\lambda u)$ is increasing for every $u$ we
let $\lambda \in (0, 1)$ and $w \in [C^1_0(\Omega; \mathbb R^{n})]^m$.
Since $\phi^*$ is increasing, 
\[
\int_\Omega \Big(\sum_{i=1}^m\lambda u_i \div w_i - \phi^*(x, |w|)\Big)\,dx 
\le \int_\Omega  \Big( \sum_{i=1}^mu_i\div(\lambda w_i) - \phi^*(x, |\lambda w|)\Big)\,dx 
\le \rho^m_{V,\phi}(u),
\]
as $\lambda w \in [C^1_0(\Omega; \mathbb R^{n})]^m$. Taking the 
supremum over $w$, we get $\rho^m_{V,\phi}(\lambda u) \le \rho^m_{V,\phi}(u)$.

To prove convexity of $\rho^m_{V,\phi}$ we let $u, v \in L^1(\Omega;\R^m)$, $\theta \in (0, 1)$ and $w\in [C^1_0(\Omega; \mathbb R^{n})]^m$. Then
\[
\begin{split}
&\int_\Omega \Big(\sum_{i=1}^m(\theta u_i + (1-\theta) v_i) \div w_i - \phi^*(x, |w|)\Big)\,dx\\
& \qquad = \theta\int_\Omega \Big(\sum_{i=1}^mu_i \div w_i - \phi^*(x, |w|)\Big)\,dx
+(1-\theta)\int_\Omega\Big(\sum_{i=1}^m v_i \div w_i - \phi^*(x, |w|)\Big)\,dx\\
&\qquad \le \theta \rho^m_{V,\phi}(u) + (1-\theta) \rho^m_{V,\phi}( v).
\end{split}
\] 
The claim follows when we take the supremum over such $w$. 

Finally, we show that $\rho^m_{V,\phi}$ is left-continuous. 
Since $\lambda\mapsto \rho^m_{V,\phi}(\lambda u)$ is increasing, 
$\rho^m_{V,\phi}(\lambda u) \le \rho^m_{V,\phi}(u)$ for $\lambda \in(0, 1)$.
We next consider the opposite inequality at the limit.
Let first $\rho^m_{V,\phi}(u)<\infty$ and fix $\epsilon>0$. 
By the definition of $\rho^m_{V,\phi}$ there exists a test function 
$w\in [C^1_0(\Omega; \mathbb R^{n})]^m$ with $\rho_{\phi^*}(|w|)<\infty$ such that
\[
\int_\Omega \Big( \sum_{i=1}^mu_i \div w_i \Big)\,dx 
\ge 
\rho^m_{V,\phi}(u) - \epsilon + \rho_{\phi^*}(|w|). 
\]
Multiplying this inequality by $\lambda\in (0,1)$ and subtracting $\rho_{\phi^*}(|w|)$, 
we obtain that 
\[
\rho^m_{V,\phi}(\lambda u) 
\ge 
\lambda (\rho^m_{V,\phi}(u) - \epsilon) + (\lambda-1)\rho_{\phi^*}(|w|). 
\]
Hence 
\[
\lim_{\lambda\to 1^-}\rho^m_{V,\phi}(\lambda u) 
\ge 
\rho^m_{V,\phi}(u) - \epsilon. 
\]
The claim follows from this as $\epsilon\to 0^+$. The case $\rho^m_{V,\phi}(u)=\infty$ is proved similarly, 
we only need to replace $\rho^m_{V,\phi}(u) - \epsilon$ by $\frac1\epsilon$. 
\end{proof}

\begin{lemma}[$BV$-type approximation by smooth functions]\label{lem:density0}
Assume that $\phi \in \Phiw(\Omega)$ satisfies \azero{}, \aone{} and \adec{}. 
Then there exists $c\ge 1$ 
such that for every $u \in L^\phi(\Omega;\R^m)$ we can find $u_k \in C^\infty(\Omega;\R^m)$ with 
\[
u_k \to u\text{ in }L^\phi(\Omega;\R^m)
\quad\text{and}\quad
V^m_\phi(u) \le \lim_{k \to \infty} V^m_\phi(u_k)\le c V^m_\phi(u).
\]
\end{lemma}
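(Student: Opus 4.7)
\emph{Plan.} The strategy is to reduce to the scalar version of this lemma, established in \cite[Lemma~5.4]{EleHH25}, by approximating each component of $u$ separately and then recombining, using the comparison inequalities between $V_\phi$ and $V^m_\phi$ (and between the scalar and vectorial $L^\phi$ norms) that are already contained in the proof of Lemma~\ref{BVvectorial}. Write $u = (u_1,\ldots,u_m) \in L^\phi(\Omega;\R^m)$; Lemma~\ref{BVvectorial} ensures that each component $u_\alpha$ lies in $L^\phi(\Omega)$, and applying the scalar density result coordinatewise yields sequences $u_{k,\alpha} \in C^\infty(\Omega)$ with $u_{k,\alpha} \to u_\alpha$ in $L^\phi(\Omega)$ and
\[
V_\phi(u_\alpha) \;\le\; \lim_{k \to \infty} V_\phi(u_{k,\alpha}) \;\le\; c_0\, V_\phi(u_\alpha),
\]
for some constant $c_0 \ge 1$ depending only on $\phi$ (and provided by the hypotheses \azero{}, \aone{}, \adec{}).

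Set $u_k := (u_{k,1},\ldots,u_{k,m}) \in C^\infty(\Omega;\R^m)$. The norm comparison from the proof of Lemma~\ref{BVvectorial} gives $\|u_k - u\|_\phi \lesssim \sum_\alpha \|u_{k,\alpha} - u_\alpha\|_\phi \to 0$, so $u_k \to u$ in $L^\phi(\Omega;\R^m)$ and in particular in $L^1_{\loc}$. For the lower semicontinuity part, fix any test field $w \in [C_0^1(\Omega;\R^n)]^m$ with $\|w\|_{\phi^*} \le 1$: by $L^1_{\loc}$ convergence of the components,
\[
\sum_{\alpha=1}^m \int_\Omega u_\alpha \div w_\alpha \,dx
\;=\; \lim_{k\to\infty}\sum_{\alpha=1}^m \int_\Omega u_{k,\alpha} \div w_\alpha \,dx
\;\le\; \liminf_{k\to\infty} V^m_\phi(u_k),
\]
and taking the supremum over $w$ gives $V^m_\phi(u) \le \liminf_k V^m_\phi(u_k)$.

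For the matching upper bound, chain the two variation comparisons appearing in the proof of Lemma~\ref{BVvectorial}, namely $V^m_\phi(v) \le \sum_\alpha V_\phi(v_\alpha)$ (applied to $v = u_k$) and $V_\phi(u_\alpha) \le V^m_\phi(u)$:
\[
\limsup_{k\to\infty} V^m_\phi(u_k)
\;\le\; \sum_{\alpha=1}^m \lim_{k\to\infty} V_\phi(u_{k,\alpha})
\;\le\; c_0 \sum_{\alpha=1}^m V_\phi(u_\alpha)
\;\le\; c_0\, m\, V^m_\phi(u).
\]
Combined with the lower semicontinuity just established, this sandwiches $V^m_\phi(u_k)$ asymptotically between $V^m_\phi(u)$ and $c_0 m\, V^m_\phi(u)$; extracting a subsequence along which the sequence converges yields the $u_k$ required by the statement, with $c := c_0 m$.

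\emph{Main obstacle.} The vectorial extension is essentially bookkeeping once the scalar density result is available: the genuine analytic work (mollification, treatment of the boundary of $\Omega$, and the use of \azero{}, \aone{}, \adec{} to upgrade $L^\phi$ convergence to control of $V_\phi$) is already packaged inside \cite[Lemma~5.4]{EleHH25}. The only mild cost of the componentwise reduction is the extra factor $m$ appearing in the constant, which is harmless here since the statement asserts only the existence of some $c \ge 1$; one could, if desired, recover a dimension-free constant by running the scalar mollification argument directly on $u$ and controlling $\|w \ast \rho_{1/k}\|_{\phi^*} \lesssim \|w\|_{\phi^*}$ via maximal-function bounds on $L^{\phi^*}$.
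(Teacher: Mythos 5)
Your componentwise reduction is correct and yields the statement, but it is a genuinely different route from the one the paper takes. The paper simply omits the proof, remarking that it is ``essentially identical'' to the scalar case \cite[Lemma~5.4]{EleHH25}, i.e.\ the intended argument re-runs the scalar mollification/partition-of-unity construction directly on $\R^m$-valued functions, treating the vector-valued test fields $w\in[C^1_0(\Omega;\R^n)]^m$ throughout and obtaining the same constant $c$ as in the scalar statement. You instead invoke the scalar lemma coordinatewise and recombine via the two variation comparisons in Lemma~\ref{BVvectorial}. The bookkeeping is sound: the lower bound $V^m_\phi(u)\le\liminf_k V^m_\phi(u_k)$ follows from lower semicontinuity of $V^m_\phi$ under $L^1_\loc$ convergence (valid since (A0) gives $L^\phi\hookrightarrow L^1$ on bounded $\Omega$), the upper bound $\limsup_k V^m_\phi(u_k)\le c_0 m V^m_\phi(u)$ from $V^m_\phi\le\sum_\alpha V_\phi(\cdot_\alpha)$ and $V_\phi(u_\alpha)\le V^m_\phi(u)$, and extracting a convergent subsequence of the bounded scalar sequence $V^m_\phi(u_k)$ is legitimate since the lemma only asserts the existence of \emph{some} approximating sequence. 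The trade-off is exactly the one you flag: the componentwise reduction costs a factor $m$ in the constant, which is harmless for the statement as written, whereas running the mollification directly in the vector-valued setting (as the paper intends) gives a constant independent of $m$. Your remark about recovering the sharper constant via maximal-function bounds for $w*\rho_{1/k}$ on $L^{\phi^*}$ is precisely the mechanism used in the scalar proof under \aone{}.
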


The following result is the vectorial counterpart of \cite[Theorem~5.2]{EleHH25}

\begin{lemma}\label{lem:BV-gradient}
Let $\phi \in \Phiw(\Omega)$ and $u \in W^{1, 1}_\loc(\Omega;\R^m)$. 
Then $V^m_\phi(u) \le \| \nabla u \|_{(L^{\phi^*}(\Omega;\R^m))'}$.
\begin{enumerate}
\item[(1)]
If $C^1_0(\Omega; \mathbb R^{m\times n})$ is dense in $L^{\phi^*}(\Omega; \mathbb R^{m\times n})$, 
then $V^m_\phi(u) = \| \nabla u \|_{(L^{\phi^*}(\Omega;\R^m))'}$.
\item [(2)]
If $\phi$ satisfies \azero{}, \aone{} and \adec{}, 
then $V^m_\phi(u) \approx \| \nabla u \|_{L^\phi(\Omega;\R^m)}$.
\end{enumerate}
\end{lemma}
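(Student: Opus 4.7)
The plan is to establish the main inequality by componentwise integration by parts, deduce part (1) by a direct duality argument using the density hypothesis, and obtain part (2) by combining (1) with the smooth approximation provided by Lemma~\ref{lem:density0}.

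For the main inequality, given $u\in W^{1,1}_\loc(\Omega;\R^m)$ and any test field $w\in[C^1_0(\Omega;\R^n)]^m$ with $\|w\|_{\phi^*}\le 1$, componentwise integration by parts yields
\[
\sum_{\alpha=1}^m \int_\Omega u_\alpha \div w_\alpha\,dx \;=\; -\int_\Omega \nabla u : w\,dx,
\]
where $\nabla u : w := \sum_{\alpha=1}^m \nabla u_\alpha\cdot w_\alpha$ is the Frobenius inner product. The pointwise Cauchy--Schwarz bound $|\nabla u:w|\le |\nabla u|\,|w|$ together with the definition of the associate norm (Definition~\ref{def:dual}) then gives
\[
\Bigl|\sum_{\alpha=1}^m \int_\Omega u_\alpha \div w_\alpha\,dx\Bigr| \;\le\; \int_\Omega |\nabla u|\,|w|\,dx \;\le\; \|\nabla u\|_{(L^{\phi^*}(\Omega;\R^m))'}\,\|w\|_{\phi^*},
\]
and taking the supremum over admissible $w$ produces $V^m_\phi(u)\le \|\nabla u\|_{(L^{\phi^*}(\Omega;\R^m))'}$.

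For part (1), I would argue by duality. Rewriting the associate norm as
\[
\|\nabla u\|_{(L^{\phi^*}(\Omega;\R^m))'} \;=\; \sup_{g\ge 0,\,\|g\|_{\phi^*}\le 1} \int_\Omega |\nabla u|\,g\,dx,
\]
I fix such $g$ and consider the matrix field $W := -g\,\nabla u/|\nabla u|$, extended by zero where $\nabla u=0$, so that $|W|\le g$ and hence $\|W\|_{\phi^*}\le 1$. By the density hypothesis there exist $w_k\in[C^1_0(\Omega;\R^n)]^m$ with $w_k\to W$ in $L^{\phi^*}(\Omega;\R^{m\times n})$, which can be normalized so that $\|w_k\|_{\phi^*}\le 1+\varepsilon_k$ with $\varepsilon_k\to 0$. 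Applying the integration-by-parts identity to each $w_k$ gives
\[
(1+\varepsilon_k)\,V^m_\phi(u) \;\ge\; -\int_\Omega \nabla u : w_k\,dx \;\longrightarrow\; -\int_\Omega \nabla u : W\,dx \;=\; \int_\Omega |\nabla u|\,g\,dx,
\]
and passing to the supremum over $g$ and letting $\varepsilon_k\to 0$ yields the reverse inequality and hence equality.

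For part (2), the equivalence $\|\nabla u\|_{(L^{\phi^*}(\Omega;\R^m))'}\approx \|\nabla u\|_{L^\phi(\Omega;\R^m)}$ follows from a double application of \cite[Theorem~3.4.6]{HarH19} together with $\phi^{**}\approx\phi$ for weak $\Phi$-functions; combined with the main inequality this already gives $V^m_\phi(u)\lesssim \|\nabla u\|_{L^\phi}$. For the reverse estimate I would invoke Lemma~\ref{lem:density0} to produce smooth approximants $u_k\to u$ in $L^\phi$ with $V^m_\phi(u)\le \lim_k V^m_\phi(u_k)\le c\,V^m_\phi(u)$, and observe that under \azero{}, \aone{}, \adec{} the mollification construction of Lemma~\ref{lem:density0} also gives $\nabla u_k\to \nabla u$ in $L^\phi$. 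For each smooth $u_k$, the extremal matrix field associated with a near-maximizer $g$ of $\|\nabla u_k\|_{(L^{\phi^*})'}$ can be regularized by convolution and truncation (using the smoothness of $\nabla u_k$) to yield admissible test fields in the definition of $V^m_\phi(u_k)$, thereby recovering $\|\nabla u_k\|_{L^\phi}\lesssim V^m_\phi(u_k)$; passing to the limit in $k$ concludes. The hardest step is this final smoothing argument, which substitutes for the clean density assumption of part (1) under the weaker \adec{} hypothesis and closely follows the scalar template of \cite[Theorem~5.2]{EleHH25}, while the vector-valued reduction is handled by Lemma~\ref{BVvectorial} together with the componentwise integration by parts already used above.
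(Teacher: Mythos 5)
Your handling of the main inequality and of part (1) follows the same route as the paper: componentwise integration by parts to reduce $V^m_\phi$ to a supremum of $\int \nabla u : w\,dx$ over admissible test fields, then Hölder for the upper bound, and density of $C^1_0$ in $L^{\phi^*}$ for the equality. One technical point: in (1) you assert $-\int_\Omega \nabla u : w_k\,dx \to -\int_\Omega \nabla u : W\,dx$ from mere $L^{\phi^*}$-convergence of $w_k$ to $W$, but since $\nabla u$ is only $L^1_{\rm loc}$ this product convergence does not follow from Hölder unless one already knows $\nabla u\in L^\phi$; the paper instead passes through an a.e.\ subsequence and Fatou, which sidesteps this, and you should do the same (or argue that the extremal $W$ makes the integrand one-signed, so Fatou applies).

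Part (2) is where your proposal genuinely departs from the paper, and here there is a real gap. Your scheme is to smooth $u$ via Lemma~\ref{lem:density0} and then apply a density/extremal-field argument to each smooth $u_k$, passing to the limit. Two problems arise. First, the assertion that ``$\nabla u_k\to\nabla u$ in $L^\phi$'' presupposes $\nabla u\in L^\phi(\Omega;\R^m)$, which is precisely the conclusion you are trying to reach; as written the argument is circular. (You only need a \emph{lower semicontinuity} estimate $\|\nabla u\|_{L^\phi}\le\liminf_k\|\nabla u_k\|_{L^\phi}$, e.g.\ via a.e.\ convergence and Fatou on the modular, so this could be repaired, but the strong convergence claim cannot.) Second, and more fundamentally, the hypotheses of part (2) do \emph{not} guarantee the density hypothesis of part (1): with \adec{} on $\phi$ but without \ainc{}, the conjugate $\phi^*$ may carry an $L^\infty$ component (exactly what happens when $p=1$ on a nonempty set, the case of interest in this paper), and $C^1_0$ is not dense in $L^\infty$. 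So ``regularize by convolution and truncation to yield admissible test fields'' is not a routine step here; this is precisely the obstruction that forces the paper to give an independent proof of (2). The paper's construction avoids both issues: it mollifies the \emph{normalized} direction field $w_{\epsilon,\delta}=(\tfrac{\nabla u}{|\nabla u|+\epsilon})*\eta_\delta$, which is automatically bounded by $1$, multiplies by $h\in C^1_0(\Omega)$ with $\|h\|_{\phi^*}\le 1$ so that the resulting test field is admissible without any density assumption on $L^{\phi^*}$, and passes to the limit by dominated convergence with the majorant $|\nabla u|\,|h|\in L^1$ (which only uses $u\in W^{1,1}_{\rm loc}$). You should adopt that device, or at minimum replace your appeal to density in $L^{\phi^*}$ by an argument that works in the presence of the $L^\infty$ component.
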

\begin{proof}
Since $u \in W^{1, 1}_\loc(\Omega;\R^m)$, it follows from the 
definition of $V^m_\phi$ and integration by parts that 
\begin{equation}\label{eq:integrationByParts}
V^m_\phi(u)=\sup\bigg\{ \int_\Omega \sum_{i=1}^m\nabla u_i \cdot w_i \,dx \colon  w \in [C^1_0(\Omega; \mathbb R^n)]^m, \| w \|_{\phi^*}\le 1 
\bigg\},
\end{equation}
The definition of the associate space norm implies that 
\[
\int_\Omega \sum_{i=1}^m\nabla u_i \cdot w_i \,dx
\le 
\int_\Omega |\nabla u|\, |w| \,dx
\le 
\| \nabla u \|_{(L^{\phi^*}(\Omega;\R^m))'} \|w\|_{L^{\phi^*}(\Omega;\mathbb R^{m\times n})}.
\]
Taking the supremum over $w\in [C^1_0(\Omega; \mathbb R^n)]^m$ with $\|w\|_{L^{\phi^*}(\Omega)}\le 1$,
we conclude that 
$V^m_\phi(u) \le \| \nabla u \|_{(L^{\phi^*}(\Omega))'}$.

To prove claim (1), we next show the opposite inequality 
$\| \nabla u \|_{(L^{\phi^*}(\Omega;\R^m))'} \le V^m_\phi(u)$ when smooth functions are dense.
Let $ w \in L^{\phi^*}(\Omega; \mathbb R^{m\times n})$ with $\|w\|_{\phi^*}=1$ and let $(w_j)$ be a sequence from 
$C^1_0(\Omega; \mathbb R^{m \times n})$ with $w_j \to w$ in $L^{\phi^*}(\Omega; \mathbb R^{m\times n})$ and almost everywhere. 
Since also $w_j/\|w_j\|_{\phi^*} \to w$ in $L^{\phi^*}(\Omega; \mathbb R^{m\times n})$, we may assume that 
$\|w_j\|_{\phi^*}=1$. By Fatou's Lemma, 
\[
\liminf_{j \to \infty} \int_\Omega \sum_{i=1}^m\nabla u_i \cdot (w_j)_i \,dx \ge \int_\Omega \sum_{i=1}^m\nabla u_i \cdot w_i \,dx,
\]
so it follows from \eqref{eq:integrationByParts} that 
\[
V^m_\phi(u)\ge \sup\bigg\{ \int_\Omega \sum_{i=1}^m\nabla u_i \cdot w_i \,dx \,\Big|\, w \in L^{\phi^*}(\Omega; \mathbb R^{m\times n}), \| 
w\|_{\phi^*}\le 
1 \bigg\}.
\]
For $h\in L^{\phi^*}(\Omega)$ we set $w:=\frac{\nabla u}{|\nabla u|} h$ if $|\nabla u|\neq 0$ 
and $0$ otherwise in the inequality above. Thus 
\[
V^m_\phi(u)\ge \sup\bigg\{ \int_\Omega |\nabla u| \, h \,dx \,\Big|\, h \in L^{\phi^*}(\Omega), \| h\|_{\phi^*}\le 1 
\bigg\} = \| \nabla u \|_{(L^{\phi^*}(\Omega))'}. 
\]
Hence $V^m_\phi(u) = \| \nabla u \|_{(L^{\phi^*}(\Omega))'}$ and the proof of (1) is complete.

Then we prove (2).
Fix $h\in C^1_0(\Omega)$. Since $w_{\epsilon,\delta}:=(\frac{\nabla u}{|\nabla u|+\epsilon})* \eta_\delta$ is 
bounded and converges to $\frac{\nabla u}{|\nabla u|+\epsilon}$ a.e.\ as $\delta\to 0$, 
it follows by dominated convergence (with majorant $|\nabla u|\, |h|$) that 
\[
\lim_{\epsilon\to 0^+}\lim_{\delta\to 0^+}\int_\Omega \sum_{i=1}^m\nabla u_i \cdot (w_{\epsilon,\delta} h)_i \,dx
=
\int_\Omega |\nabla u|\, h \,dx.
\]
Since $w_{\epsilon,\delta}h\in C^1_0(\Omega;\mathbb R^{m\times n})$, this and \eqref{eq:integrationByParts} imply that 
\[
V^m_\phi(u)
\ge
\sup\bigg\{ \int_\Omega |\nabla u|\, h \,dx \,\Big|\, h \in C^1_0(\Omega), \| h \|_{\phi^*}\le 1 \bigg\}. 
\]
The final part of the proof coincides verbatim with \cite[Theorem~5.2]{EleHH25}, so we omit it.
\end{proof}

Note that a sufficient condition for the density of $C^1_0(\Omega; \mathbb R^{m\times n})$ in 
$L^{\phi^*}(\Omega; \mathbb R^{m\times n})$ is that $\phi$ satisfies \azero{} and \adec{} 
\cite[Theorem~3.7.5]{HarH19}.

In \cite{HarHL08, HarHLT13}, a BV-type space with variable exponent was defined based on the set 
$Y$ where the exponent equals $1$ using the modular which is denoted $\rho_{\text{old}}$ below. The next result connects this definition to the one 
given above based on \cite{EleHH25}.  

\begin{theorem}\label{thm:equivBVs}
Let $p:\Omega \to [1, \infty)$ be lower semicontinuous 
and strongly $\log$-Hölder continuous in $Y$. 
Then 
\[
\BV^\px(\Omega;\R^m) =\BV(\Omega;\R^m)\cap W^{1, p(\cdot)}(\Omega \setminus Y;\R^m)
\] 
and $V^m_{\px}(u; \Omega)\approx \|u\|_{\rho_{\text{\rm old}}}$, where 
$\rho_{\text{\rm old}}(u) := |D u|(Y) + \rho_{L^{p(\cdot)}(\Omega \setminus Y; \R^{n\times m})}(\nabla u)$.
\end{theorem}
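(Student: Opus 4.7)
The plan is to reduce to the scalar case $m=1$---where the analogous statement for general $\Phi$-functions is essentially established in \cite{EleHH25}---and then to glue the componentwise equivalences. By Lemma~\ref{BVvectorial}, $u \in \BV^\px(\Omega;\R^m)$ iff each component $u_i \in \BV^\px(\Omega)$, with $V^m_\px(u)\approx \sum_i V_\px(u_i)$. Because the total variation of a vector-valued measure and the Euclidean norm of a vector gradient are each comparable to the corresponding componentwise sums, the modular $\rho_{\mathrm{old}}$ and its Luxemburg norm decompose up to multiplicative constants in the same way; it therefore suffices to establish the scalar version.

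For the forward inclusion $\BV^\px(\Omega)\subset \BV(\Omega)\cap W^{1,\px}(\Omega\setminus Y)$, given $u\in \BV^\px(\Omega)$ I would first note that $Y$ is closed and $\Omega\setminus Y$ open by lower semicontinuity of $p$. For any $w\in C^1_0(\Omega;\R^n)$ with $|w|\le 1$, one has $\rho_{\phi^*}(w)\le |\Omega|$ because $\phi^*(x,\cdot)$ vanishes on $Y$ for arguments at most $1$ and is bounded by $1/p'(x)\le 1$ elsewhere; hence $\|w\|_{\phi^*}\lesssim 1$, and the definition of $V_\px$ gives $|Du|(\Omega)\lesssim V_\px(u)$, so $u \in \BV(\Omega)$ with $|Du|(Y)\lesssim V_\px(u)$. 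To obtain $\nabla u\in L^\px(\Omega\setminus Y)$, I would fix $V\Subset \Omega\setminus Y$ open (so $p^-_{\overline V}>1$ by lsc and compactness) and test against $w\in C^1_0(V;\R^n)$ extended by zero; since $\|w\|_{\phi^*}=\|w\|_{L^{p'(\cdot)}(V)}$, Riesz representation furnishes an $L^\px(V)$ density of $Du|_V$ which must coincide with $\nabla u$, giving $\|\nabla u\|_{L^\px(V)}\lesssim V_\px(u)$ with constant independent of $V$. Exhausting $\Omega\setminus Y$ by such $V_k$ and passing to the limit via monotone convergence on modulars yields $\|\nabla u\|_{L^\px(\Omega\setminus Y)}\lesssim V_\px(u)$, whence $\|u\|_{\rho_{\mathrm{old}}}\lesssim V_\px(u)$.

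For the reverse direction, let $u\in \BV(\Omega)\cap W^{1,\px}(\Omega\setminus Y)$; since $u$ is Sobolev on $\Omega\setminus Y$, the singular part $D^s u$ is concentrated on $Y$. For $w\in C^1_0(\Omega;\R^n)$ with $\|w\|_{\phi^*}\le 1$, integration by parts splits
\[
\int_\Omega u\,\div w\,dx = -\int_Y \nabla u\cdot w\,dx -\int_{\Omega\setminus Y}\nabla u\cdot w\,dx -\int_Y w\cdot dD^s u.
\]
Finiteness of $\rho_{\phi^*}(w)$ forces $|w|\le 1$ a.e.\ on $Y$ (else $\phi^*(x,|w|)=\infty$ on a set of positive measure), so the two $Y$-integrals sum in modulus to at most $\int_Y |\nabla u|\,dx+|D^s u|(Y)=|Du|(Y)$; on $\Omega\setminus Y$, variable-exponent Hölder gives the bound $2\|\nabla u\|_{L^\px(\Omega\setminus Y)}\|w\|_{L^{p'(\cdot)}(\Omega\setminus Y)}\le 2\|\nabla u\|_{L^\px(\Omega\setminus Y)}$. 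Taking the supremum over $w$ yields $V_\px(u)\lesssim \|u\|_{\rho_{\mathrm{old}}}$, closing the equivalence.

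The main subtlety lies in the forward direction: extracting global $L^\px$-integrability of $\nabla u$ on $\Omega\setminus Y$ when $p^-_{\Omega\setminus Y}$ can equal $1$, so that smooth functions need not be globally dense in $L^{p'(\cdot)}(\Omega\setminus Y)$. The remedy is precisely the localization to compact subsets where $p$ is bounded away from $1$ followed by an exhaustion/monotone-convergence argument; this is the essential technical ingredient that transfers the scalar modular bound from \cite{EleHH25} to the present variable-exponent setting.
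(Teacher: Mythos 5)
Your overall strategy---reduce to scalar components via Lemma~\ref{BVvectorial}, handle the easy direction by splitting the integration-by-parts identity over $Y$ and $\Omega\setminus Y$ (noting $|w|\le 1$ a.e.\ on $Y$ whenever $\rho_{\phi^*}(|w|)<\infty$), and handle the hard direction by exhausting $\Omega\setminus Y$ by subsets where $p$ is bounded away from $1$ and passing to the limit---matches the paper's proof almost exactly (the paper exhausts via $\Omega\setminus F_i$ with $F_i=\{p\le 1+\tfrac1i\}$ closed by lower semicontinuity rather than by $V\Subset\Omega\setminus Y$, and uses Young at the modular level where you use H\"older; these are cosmetic differences).

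There is, however, one step that as written would not close: you assert $\|w\|_{\phi^*}=\|w\|_{L^{p'(\cdot)}(V)}$ and from it conclude $\|\nabla u\|_{L^\px(V)}\lesssim V_\px(u)$ ``with constant independent of $V$.'' The identity is false: $\phi^*(x,s)=\frac{1}{p'(x)}s^{p'(x)}$ carries the weight $\frac{1}{p'(x)}$, and the comparison constant between the two Luxemburg norms is governed by $(p')^+_V=(p^-_V)'$, which blows up as $V$ exhausts $\Omega\setminus Y$ and $p^-_V\to 1^+$. If you route the estimate through the dual of $L^{p'(\cdot)}(V)$ you therefore get $\|\nabla u\|_{L^\px(V)}\le C(p^-_V)\,V_\px(u)$ with $C(p^-_V)\to\infty$, and the monotone convergence step fails. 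The correct device---which is precisely what the paper uses and is the crux of the argument---is to work with the associate-space norm throughout: by Lemma~\ref{lem:BV-gradient}(1), $V_\px(u;V)=\|\nabla u\|_{(L^{\phi^*}(V))'}$ (using that $C^1_0(V)$ is dense in $L^{\phi^*}(V)$ when $(p')^+_V<\infty$), and the equivalence $\|\cdot\|_{(L^{\phi^*}(V))'}\approx\|\cdot\|_{L^\phi(V)}$ holds with \emph{absolute} constants independent of $p^-_V$ (see \cite[Theorem~3.4.6 and Proposition~2.4.5]{HarH19}). Only then does the uniform bound $\|\nabla u\|_{L^\px(V_k)}\le c\,V_\px(u;\Omega)$ hold, after which the normalization $v:=u/(cV_\px(u;\Omega))$ and monotone convergence on the modular $\int|\nabla v|^{p(x)}dx$ (as in the paper) yield the claim. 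So the structure is right, but you must not replace $\|\cdot\|_{\phi^*}$ by the unweighted $L^{p'(\cdot)}$-norm.
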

\begin{proof}
By Lemma~\ref{BVvectorial}, we can argue using components and so it suffices to 
consider the case $m=1$. 
Furthermore, it suffices to prove that $\|\cdot\|_{\rho_{V,\phi}}\approx \|u\|_{\rho_{\text{\rm old}}}$ since $V^m_\phi(u; \Omega) \approx \|\cdot\|_{\rho_{V,\phi}}$ by \cite[Lemma~4.7]{EleHH25} 
and the Lebesgue  norms of the function are the same in both spaces. 

Assume first that $u\in \BV(\Omega)\cap W^{1, p(\cdot)}(\Omega \setminus Y)$ and 
$\rho_{\text{\rm old}}(u)<\infty$. 
If $\rho_{\phi^*}(|w|) < \infty$, then 
$|w|\le 1$ on $Y$ by Lemma~\ref{lem:smallOnY}. Thus 
\[
\begin{split}
\rho_{V,\px}(u)
&= \sup \bigg\{ \int_{\Omega} u \div w \,dx- \int_{\Omega \setminus Y}\frac{|w|^{p'(x)}}{p'(x)} \,dx \,\Big|\,  w \in C^1_0(\Omega, \R^n), \rho_{\phi^*}(|w|) < \infty \bigg\}\\
&\le \sup \bigg\{ \int_{\Omega} u \div w \,dx- \int_{\Omega \setminus Y}\frac{|w|^{p'(x)}}{p'(x)} \,dx 
\,\Big|\, w \in C^1_0(\Omega, \R^n), |w| \le 1 \text{ on } Y\bigg\}
\end{split}
\] 
where $p'$ is the H\"older conjugate of $p$. Since $p$ is lower semicontinuous, 
$Y = \{ p\le 1\}$ is closed.
Since further $u \in W^{1, p(\cdot)}(\Omega \setminus Y)$, $D^su=0$ in the open set 
$\Omega \setminus Y$. Then $|w|\le 1$ and Young's inequality imply that  
\[
\int_{\Omega} u \div w \,dx
=
\int_{\Omega \setminus Y} w \cdot \nabla u \,dx + \int_{Y} w \, dDu 
\le 
\int_{\Omega \setminus Y} \frac{|\nabla u|^{p(x)}}{p(x)}+ \frac{|w|^{p'(x)}}{p'(x)} \,dx
+ |Du|(Y).
\]
Using this in the previous estimate for the modular, we find that 
\[
\rho_{V,\px}(u) \le
\int_{\Omega \setminus Y} \frac{|\nabla u|^{p(x)}}{p(x)} \,dx  + |Du|(Y).  
\]
We have thus shown that $\rho_{V,\px}(u) \le \rho_{\text{old}}(u)$. 
It follows by the definition of the Luxemburg norm that
$\|\cdot\|_{\rho_{V,\px}} \le \|\cdot\|_{\rho_{\text{old}}}$.

Assume then conversely that $u\in \BV^\px(\Omega)$ so that $V_\px(u; \Omega)<\infty$.
Denote $F_i := \{p \le 1 + \frac1i\}$. Then $F_i \searrow Y$ 
and $p^-_{\Omega \setminus F_i} \ge 1 +\frac1i$. 
Since $p$ is lower semicontinuous, $F_i$ is closed in $\Omega$.
%
Thus $(p')^+_{\Omega \setminus F_i}<\infty$
so that $C^1_0(\Omega\setminus F_i)$ is dense in $L^{p'(\cdot)}(\Omega \setminus F_i)$ by \cite[Theorem~3.7.15]{HarH19}. Thus Lemma~\ref{lem:BV-gradient}(1) yields that 
\[
 \|\nabla u\|_{(L^{\phi^*}(\Omega \setminus F_i))'} = V_\px(u; \Omega \setminus F_i) \le
 V_\px(u; \Omega).
\] 
By \cite[Theorem 3.4.6 and Proposition 2.4.5]{HarH19}, 
$\|\nabla u\|_{L^{\phi^*}(\Omega\setminus F_i))'} \approx 
\|\nabla u\|_{L^{\phi} (\Omega \setminus F_i)}$ 
where the implicit constant $c_1$ does not depend on $p^-_{\Omega \setminus F}$.
The previous inequality for $v:= u / (c_1 V_\px(u; \Omega))$ and monotone convergence yield 
\[
\int_{\Omega\setminus Y} |\nabla v|^{p(x)} \,dx 
=
\lim_{i\to\infty}\int_{\Omega \setminus F_i} |\nabla v|^{p(x)} \,dx 
\le
\lim_{i\to\infty}\max\big\{1, 
\|\nabla v\|_{L^\px(\Omega \setminus F_i)}^{p^+}\big\}
\le 1.
\]
Hence $\|\nabla v\|_{L^{p(\cdot)}(\Omega \setminus Y)}\le 1$ and so $\|\nabla u\|_{L^{p(\cdot)}(\Omega \setminus Y)} \le c V_\px(u; \Omega)$.

By \cite[Example~4.2]{EleHH25} we have  $BV^{\px}(\Omega) \subset \BV(\Omega)$, and 
$|Du|(\Omega)\le c V_\px(u; \Omega)$ since $\phi$ satisfies $\azero{}$. 
Thus $\|u\|_{\rho_{\text{old}}} \lesssim V_\px(u; \Omega)$, 
which concludes the proof.
\end{proof}


\section{Proof of main result}

We recall the definition of the weak recession function
\[
f^\infty(\xi)=\limsup_{t\to\infty}\frac{f(t\xi)}{t}.
\]

\begin{remark}\label{rem:recession}
If $f:\mathbb R^{m\times n}\to[0,\infty)$ is convex, then the limit superior 
in $f^\infty$ is a limit as $t\mapsto \frac{f(t\xi)-f(0)}{t}$ is increasing. 
If, additionally, $f$ is Lipschitz, then 
\[
f^\infty_s (\xi) = 
\lim_{\substack{t\to \infty\\\xi'\to \xi}} \frac{f(t\xi')}{t}
=
\lim_{\substack{t\to \infty\\\xi'\to \xi}} \frac{f(t\xi) -f(t \xi')}{t} + f^\infty(\xi)
= f^\infty(\xi).
\]
In this case there is no need distinguish the weak and strong recession functions. 
As we recall below, our assumptions imply that $f^\infty_s = f^\infty$ for rank-one matrices, 
in particular for $f^\infty(\frac{d D^s u}{d |D^s u|})$.

However, the weak and strong recession functions are not the same without uniformly 
linear growth: if $x_i\in \Omega \setminus Y$ with $x_i\to x\in Y$, then 
\[
(f^{p(x)})^\infty_s (\xi) \ge  
\lim_{x_i\to x} \lim_{\substack{t\to \infty\\\xi'\to \xi}} \frac{f(t\xi')^{p(x_i)}}{t}
= +\infty 
> f^\infty(\xi) =(f^{p(x)})^\infty_w(\xi), 
\] 
provided $f$ satisfies \Hone{} and \Hthree{}. Thus we need to be careful when considering 
recession functions in the absence of uniformly linear growth. 
%
\end{remark}

\begin{proposition}\label{prop:Fqc}
Let $\Omega$ be a bounded open set with Lipschitz boundary and 
$p$ be strongly $\log$-Hölder continuous.
If $f:\mathbb R^{m\times n}\to [0,\infty)$ satisfies \Hzero{}, \Hone{}, \Htwo{} and \Hthree{}, then the functional $F_{qc} : \BV^{p(\cdot)}(\Omega;\R^m) \to [0, \infty)$, defined as 
\[
F_{qc}(u)
:=
\int_\Omega f(\nabla u)^{p(x)}\,dx + 
\int_Y f^\infty\Big(\frac{d D^s u}{d |D^s u|}\Big)\, d |D^s u|,
\]
is lower semicontinuous with respect to $L^\px(\Omega;\R^m)$-convergence.
\end{proposition}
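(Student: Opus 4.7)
The plan is to split $\Omega$ into a region where $p$ is bounded away from $1$ (pure super-linear growth) and an open neighborhood of $Y=\{p=1\}$ (almost-linear growth), apply classical super-linear quasi-convex lower semi-continuity on the former, apply Lemma~\ref{lem:lsc} on the latter, and glue the two bounds as the neighborhood shrinks to $Y$.

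Let $u_h \to u$ in $L^\px(\Omega;\R^m)$ and write $L := \liminf_h F_{qc}(u_h)$; assume $L < \infty$ and pass to a subsequence attaining this liminf. From \Htwo{} and the elementary $t \le t^{p(x)} + 1$ (valid for $t \ge 0$, $p(x) \ge 1$), I obtain $\int_\Omega(|\nabla u_h| + |\nabla u_h|^{p(x)})\,dx \le C$; since $f^\infty \ge m|\cdot|$, also $|D^s u_h|(\Omega) \le C$, with $D^s u_h$ and $D^s u$ supported on $Y$ by Theorem~\ref{thm:equivBVs}. Hence $\{u_h\}$ is bounded in $\BV(\Omega;\R^m)$ and a subsequence satisfies $u_h \stackrel{*}{\rightharpoonup} u$ in $\BV$; on every open $\Omega' \Subset \Omega\setminus Y$ the modular bound on $\nabla u_h$ and reflexivity of $W^{1,p(\cdot)}(\Omega')$ (since $p^-_{\Omega'} > 1$) additionally give $u_h \rightharpoonup u$ weakly in $W^{1,p(\cdot)}(\Omega';\R^m)$.

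Next, for a.e.\ $\varepsilon > 0$ (those with $|\{p = 1 + \varepsilon\}| = 0$), introduce the disjoint open sets $A_\varepsilon := \{p > 1+\varepsilon\}$ and $B_\varepsilon := \{p < 1+\varepsilon\}$ covering $\Omega$ modulo a null set, with $Y \subset B_\varepsilon$. On $A_\varepsilon$, the integrand $(x,\xi) \mapsto f(\xi)^{p(x)}$ is continuous, quasi-convex in $\xi$ (the composition of the non-negative quasi-convex $f$ with the convex non-decreasing $t\mapsto t^{p(x)}$), and has $p(\cdot)$-growth with $p^-_{A_\varepsilon} > 1$; classical lower semi-continuity of super-linear quasi-convex functionals under weak $W^{1,p(\cdot)}$-convergence (cf.\ \cite{CM, ElePZ24, Sy}) gives
\[
\int_{A_\varepsilon} f(\nabla u)^{p(x)}\,dx \le \liminf_h \int_{A_\varepsilon} f(\nabla u_h)^{p(x)}\,dx.
\]
On $B_\varepsilon$ I apply Lemma~\ref{lem:lsc} (on a sequence of Lipschitz open subsets exhausting $B_\varepsilon$, if needed) to the $x$-independent, linear-growth integrand $f$; using that $D^s u$ and $D^s u_h$ have rank-one densities $|D^s\cdot|$-a.e.\ by Alberti's theorem and are supported on $Y \subset B_\varepsilon$, Remark~\ref{rem:recession} identifies $f^\infty_s = f^\infty$ on these densities, and the lemma reads
\[
\int_{B_\varepsilon} f(\nabla u)\,dx + \int_Y f^\infty\bigl(\tfrac{dD^s u}{d|D^s u|}\bigr)\,d|D^s u| \le \liminf_h\Bigl[\int_{B_\varepsilon} f(\nabla u_h)\,dx + \int_Y f^\infty\bigl(\tfrac{dD^s u_h}{d|D^s u_h|}\bigr)\,d|D^s u_h|\Bigr].
\]

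To combine, since $p \equiv 1$ on $Y$ the inequality $t \le t^{p(x)}+1$ yields $\int_{B_\varepsilon} f(\nabla u_h)\,dx \le \int_{B_\varepsilon} f(\nabla u_h)^{p(x)}\,dx + |B_\varepsilon\setminus Y|$ and $\int_{B_\varepsilon} f(\nabla u)\,dx = \int_Y f(\nabla u)^{p(x)}\,dx + \int_{B_\varepsilon\setminus Y} f(\nabla u)\,dx$. Adding the two lower semi-continuity inequalities via $\liminf a_h + \liminf b_h \le \liminf(a_h+b_h)$ for non-negative sequences, and recognising $A_\varepsilon \cup Y = \Omega \setminus (B_\varepsilon\setminus Y)$ modulo nullity, rearrangement produces
\[
F_{qc}(u) + \int_{B_\varepsilon \setminus Y} \bigl[f(\nabla u) - f(\nabla u)^{p(x)}\bigr]\,dx \le L + |B_\varepsilon \setminus Y|.
\]
Continuity of $p$ forces $B_\varepsilon\setminus Y = \{1 < p < 1+\varepsilon\} \searrow \emptyset$, so $|B_\varepsilon\setminus Y| \to 0$; dominated convergence with the $L^1$-majorant $M(1+|\nabla u|) + f(\nabla u)^{p(x)}$ (integrable because $u \in \BV^\px(\Omega;\R^m)$ by Theorem~\ref{thm:equivBVs}) forces the correction integral to zero, proving $F_{qc}(u) \le L$. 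The main obstacle is justifying the super-linear lower semi-continuity on $A_\varepsilon$ under only continuous $p$: log-Hölder is not needed for lower semi-continuity per se, but a careful localisation of $p$ on small pieces (using uniform continuity of $p$ on $\overline\Omega$) combined with the constant-exponent Acerbi--Fusco theorem is required, and the passage $\varepsilon\to 0^+$ must handle the residual terms $|B_\varepsilon\setminus Y|$ uniformly in the limit in $h$.
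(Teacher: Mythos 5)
Your decomposition $\Omega = A_\varepsilon \cup B_\varepsilon$ (modulo a null set), with super-linear lower semicontinuity on $A_\varepsilon = \{p>1+\varepsilon\}$ and the Breit--Diening--Gmeineder linear-growth Lemma~\ref{lem:lsc} applied to the $x$-independent integrand $f$ on $B_\varepsilon$, is genuinely different from the paper's proof. The paper instead approximates the integrand \emph{globally}: it replaces $t^{p(x)}$ by its tangent linearization $\phi_j(x,t)$ for $t > j$, sets $\psi_j(x,\xi) = \phi_j(x, f(\xi))$, checks that $\psi_j$ has uniformly linear growth and a uniform modulus of continuity in $x$, applies Lemma~\ref{lem:lsc} to $\psi_j$ on \emph{all} of $\Omega$, and passes to the limit $j\to\infty$ by monotone convergence. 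That strategy needs only one lower semicontinuity theorem; your strategy needs two, and your gluing via $t \le t^{p(x)}+1$ and $|B_\varepsilon\setminus Y|\to 0$ is clean.

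The genuine gap in your argument is exactly the one you flag: the super-linear lower semicontinuity on $A_\varepsilon$. You invoke ``classical'' quasi-convex lower semicontinuity under weak $W^{1,p(\cdot)}$-convergence, but the available results for variable-exponent quasi-convex integrands (including those in \cite{CM, ElePZ24, Sy}) either assume $\log$-Hölder continuity of $p$ or establish mere lower semicontinuity only after a localization argument that is not a one-liner. What you actually have on $A_\varepsilon$ is a modular bound $\int_{A_\varepsilon}|\nabla u_h|^{p(x)}\,dx \le C$ and an integrand $f(\xi)^{p(x)}$ whose growth exponent oscillates between $1+\varepsilon$ and $p^+$; the constant-exponent Acerbi--Fusco theorem applied to a frozen $p$ does not cover this without a freezing/covering argument whose error terms must be controlled uniformly in $h$ and then in $\varepsilon$. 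Under merely continuous $p$ (which is all Proposition~\ref{prop:Fqc} assumes) this is not an off-the-shelf reference, so this step needs to be supplied. A secondary but smaller issue is the Lipschitz-boundary hypothesis of Lemma~\ref{lem:lsc} on $B_\varepsilon$: exhausting $B_\varepsilon$ by Lipschitz subdomains does work here by monotonicity (both sides of the Lemma's inequality are monotone in the domain, $D^s u_h=0$ for $u_h\in W^{1,p(\cdot)}$ and $D^s u$ is carried by the interior set $Y$), but this should be spelled out. The paper's $\psi_j$-truncation avoids both of these difficulties: no domain-dependent argument is needed at all, since the approximating integrand already has uniformly linear growth everywhere.
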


\begin{proof}
For $j \in \mathbb N$, $x \in \Omega$ and $t \in [0,\infty)$, define 
\[
\phi_j(x, t):=\Big\{\begin{array}{ll} t^{p(x)} & \hbox{ if } t \le j,\\
j^{p(x)}+ p(x)j^{p(x)-1}(t-j), & \hbox{ otherwise.}
\end{array}
\Big.
\]
Observe that $\phi_j$ is continuous,
$\phi_j(x,t) \le t^{p(x)}$, $\phi_j(x,t)\nearrow t^{p(x)}$ as $j \to \infty$, and 
\[
t-1 \le \phi_j(x,t) \le j^{p^+}+  p^+j^{p^+-1}(t-j).
\]
Let us write  $\psi_j(x,\cdot):=\phi_j(x, f(\cdot))$ and  note it is quasiconvex as the composition of a convex 
function and a quasiconvex function, by Jensen's inequality. By \Htwo{} and \Hthree{} we find that
\[
\begin{split}
m |\xi| -1 \le f(\xi) -1 \le \psi_j(x,\xi) &\le j^{p^+}+  p^+j^{p^+-1}f(\xi) \\
&\le j^{p^+}+  Mp^+j^{p^+} (1+|\xi|)
\le Mp^+j^{p^+} (2 + |\xi|).
\end{split}
\]
We consider the strong recession function (of $\psi_j+1$)
\[
\Psi_j (x, \xi) := (\psi_j+1)^\infty_s(x, \xi) =
\lim_{\substack{t\to \infty\\\xi'\to \xi\\ x' \to x}} \frac{\psi_j(x', t\xi')+1}{t},
\]
whenever the limit exists.
If $f(\xi)\leq j$, then
\[
\begin{split}
|\psi_j(x,\xi)-\psi_j(y,\xi)| &\leq |f(\xi)^{p(x)}-f(\xi)^{p(y)}|\leq j^{p^+}|1-j^{|p(x)-p(y)|}|.
\end{split}
\]
If $f(\xi)>j$, then by \Hthree{}
\begin{align*}
	|\psi_j(x,\xi)-\psi_j(y,\xi)|&\leq |j^{p(x)}-j^{p(y)}|+ (f(\xi)-j)\,|p(x)j^{p(x)-1}- p(y)j^{p(y)-1}|\\
	&\leq j^{p^+}|1-j^{|p(x)-p(y)|}|+ M (1+ |\xi|)\,|p(x)j^{p(x)-1}- p(y)j^{p(y)-1}|.
\end{align*}
Since $p \in C(\overline \Omega)$ we obtain that
$|\psi_j(x,\xi)-\psi_j(y,\xi)| \leq \omega_j (|x-y|) (1+ |\xi|)$ for  suitable moduli of continuity $\omega_j$. Moreover, by assumption, $\Omega$ is a bounded domain with Lipschitz boundary. 
We have thus verified all assumptions of Lemma~\ref{lem:lsc}, and so 
\[
u\mapsto 
\int_\Omega \psi_j(x,\nabla u)+1\,dx + \int_\Omega \Psi_j\Big(x,\frac{d D^s u}{d |D^s u|}\Big) \, d |D^s u|
\]
is  weakly* lower semicontinuous in $\BV(\Omega;\R^m)$. We observe that the ``$+1$'' in the 
integral does not affect the claim (it was only used to satisfy the assumptions of the lemma). 
By \cite[Lemma~6.1]{BreDG20}, $\xi \mapsto \psi_j(x, \xi)$ is Lipschitz continuous, and so 
\[
\Bigg|\lim_{\substack{t\to \infty\\\xi'\to \xi\\x' \to x}} \frac{\psi_j(x', t\xi')+1}{t}
- 
\lim_{\substack{t\to \infty\\ x' \to x}} \frac{\psi_j(x', t\xi)}{t}
\Bigg|
\le 
\lim_{\substack{t\to \infty\\\xi'\to \xi\\x' \to x}} \frac{|\psi_j(x', t\xi) - \psi_j(x', t\xi')|}{t} = 0.
\]
By the definition of $\psi_j$, the continuity of $p$ and Remark~\ref{rem:recession}, 
we conclude that
\[
\Psi_j (x, \xi)
=\lim_{\substack{t\to \infty\\ x' \to x}} \frac{\psi_j(x', t\xi)}{t}
=\lim_{x' \to x} p(x') j^{p(x')-1} \lim_{t\to \infty} \frac{ f(t\xi)}{t}
= p(x) j^{p(x)-1} f^\infty(\xi)
\]
for every rank-one matrix $\xi \in \mathbb R^{m\times n}$. If $x\in \Omega\setminus Y$, 
the right-hand side tends to $+\infty$, otherwise it equals $f^\infty$. 
Hence $\Psi_j (x, \xi) \nearrow (f^{p(x)})^\infty(\xi)$ as $j \to \infty$, for every rank-one matrix $\xi \in \mathbb R^{m\times n}$, in particular, by Alberti's theorem \cite{A93}, for 
$\xi=\frac{d D^s u}{d |D^s u|}$. 

Applying the lower semicontinuity result, we obtain 
\begin{align*}
&\int_\Omega \psi_j(x, \nabla u)\,dx + \int_\Omega \Psi_j\Big(x,  \frac{dD^s u}{d |D^s u|}\Big) d |D^s u| \\
&\qquad\le 
\liminf_{h \to \infty} \bigg(\int_\Omega \psi_j(x, \nabla u_h)\,dx + \int_\Omega \Psi_j\Big(x, \frac{dD^s u_h}{d |D^s u_h|} \Big)\, d |D^s u_h|\bigg)\\
&\qquad\le 
\liminf_{h \to \infty} \bigg(\int_\Omega f(\nabla u_h)^{p(x)}\,dx + \int_\Omega (f^{p(x)})^\infty\Big(\frac{d D^s u_h}{d |D^s u_h|} \Big)\, d |D^s u_h|\bigg), 
\end{align*}
whenever $u_h \overset{\ast}{\rightharpoonup}u$ in $BV(\Omega;\R^m)$, where 
we used  $\psi_j (x, \xi) \le f(\xi)^{p(x)}$ and $\Psi_j(x, \xi)\le (f^{p(x)})^\infty$ in the second step. 
Now the right hand side is independent of $j$, and hence we obtain 
by monotone convergence as $j \to \infty$,  that 
\begin{align*}
F_{qc}(u)&=\int_\Omega f(\nabla u)^{p(x)}\,dx + \int_\Omega (f^{p(x)})^\infty\Big(  \frac{dD^s u}{d |D^s u|}\Big)\, d |D^s u| \\
&\le 
\liminf_{h\to \infty}\bigg(\int_\Omega f(\nabla u_h)^{p(x)}\,dx + 
\int_\Omega (f^{p(x)})^\infty\Big(\frac{dD^s u_h}{d |D^s u_h|}\Big)\, d |D^s u_h|\bigg)
\end{align*}
whenever $u_h \overset{\ast}{\rightharpoonup}u$ in $BV(\Omega;\R^m)$.

Suppose next that $u_h\to u$ in $L^\px(\Omega;\R^m)$ for $u, u_h\in \BV^\px(\Omega; \R^m)$. 
 We may assume that 
\[
\liminf_{h\to \infty} F_{qc}(u_h) < \infty
\]
and choose a subsequence with $\lim_{k\to \infty} F_{qc}(u_{h_k})=\liminf_{h\to \infty} F_{qc}(u_h)$. 
We denote the subsequence by $u_h$, again. 
By \Htwo{}, taking into account that $\Omega$ is bounded 
\begin{align*}
\lim_h |Du_h|(\Omega) 
&= \lim_h \left(\int_\Omega |\nabla u_h|\,dx + |D^s u_h|(\Omega)\right)\\
&\le \lim_h \left( \int_\Omega 1+ |\nabla u_h|^{p(x)} dx + |D^s u_h|(\Omega)\right)\\
&\lesssim 
\lim_h \left(\int_\Omega f(\nabla u_h)^{p(x)} dx + \int_\Omega f^\infty\Big(\frac{d D^s u_h}{d |D^s u_h|}\Big)\, d |D^s u_h|\right)
<\infty.
\end{align*}
By compactness, $u_h$ converges to $u$ weakly* in $\BV$, up to a subsequence again denoted $u_h$ 
(see \cite[Proposition~1.59]{AmbFP00}).
Applying the previous result to this subsequence, we obtain that
\begin{align*}
F_{qc}(u)
\le 
\liminf_{h\to \infty}\bigg(\int_\Omega f(\nabla u_h)^{p(x)}\,dx + 
\int_\Omega (f^{p(x)})^\infty\Big(\frac{dD^s u_h}{d |D^s u_h|}\Big)\, d |D^s u_h|\bigg)
= \liminf_{h\to \infty} F_{qc}(u_h)
\end{align*}
since $(f^{p(x)})^\infty = f^\infty$ in $Y$, which includes the support of $D^su_h$ by Theorem~\ref{thm:equivBVs}.
\end{proof}

\begin{remark}
Suppose that $u, u_h \in W^{1,p(\cdot)}(\Omega;\R^m)$ and $u_h \to u$ in $L^{p(\cdot)}(\Omega;\R^m)$.
The previous proposition implies that 
\[
\int_\Omega f(\nabla u)^{p(x)} \,dx \le \liminf_h \int_\Omega f(\nabla u_h)^{p(x)}\,dx.
\]
\end{remark}

We remark that the continuity of $f$ is not actually needed in the next proposition, it suffices 
to assume that it is Borel.  

\begin{proposition}\label{prop:Fcalmeas}
Let $p$ be strongly $\log$-Hölder continuous and 
$f:\R^{n \times m} \to [0,\infty)$ satisfy \Hzero{}, \Htwo{} and \Hthree{}. 
Then
\begin{align}\label{estFcal}
\mathcal F(u,A) \le C\big(|A| + V^m_\px(u; A)+ V^m_\px(u; A)^{p^+}\big)
\end{align}
for every $u \in \BV^{p(\cdot)}(\Omega;\R^m)$ and open $A \subset \Omega$.
 
Moreover, for fixed $u \in \BV^{p(\cdot)}(\Omega;\R^m)$, the set function $A\mapsto \mathcal F(u,A)$ 
is the restriction to open subsets of $\Omega$ of a finite Radon measure in $\Omega$. 
\end{proposition}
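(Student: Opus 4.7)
The plan is twofold: first establish \eqref{estFcal} via smooth approximation, then deduce the measure property from the De Giorgi--Letta criterion.

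\emph{Step 1: the bound.} Since $p$ is log-Hölder continuous on the bounded $\Omega$, it is bounded there, so $\phi(x,t):=\tfrac{1}{p(x)}t^{p(x)}$ satisfies \azero{}, \aone{}, and \adec{}. Lemma~\ref{lem:density0} applied on $A$ produces $u_k\in C^\infty(A;\R^m)$ with $u_k\to u$ in $L^{p(\cdot)}(A;\R^m)$ and $\limsup_k V^m_{p(\cdot)}(u_k;A)\le c\,V^m_{p(\cdot)}(u;A)$. Testing with these in the definition of $\mathcal F(u,A)$ and using \Hthree{} to write $f(\xi)^{p(x)}\le (M(1+|\xi|))^{p(x)}\lesssim 1+|\xi|^{p(x)}$ gives
\[
\mathcal F(u,A)\le |A|+C\liminf_k \rho_\px(\nabla u_k).
\]
Lemma~\ref{lem:BV-gradient}(2) provides $\|\nabla u_k\|_\px\approx V^m_\px(u_k;A)$, and combined with the modular-norm comparison $\rho_\px(w)\le \|w\|_\px+\|w\|_\px^{p^+}$, this proves \eqref{estFcal} upon passing to the $\liminf$.

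\emph{Step 2: the measure property.} Set $\lambda(A):=\mathcal F(u,A)$. The plan is to verify the hypotheses of the De Giorgi--Letta criterion: monotonicity, superadditivity on disjoint opens, subadditivity $\lambda(A\cup B)\le\lambda(A)+\lambda(B)$, and inner regularity. Monotonicity and superadditivity follow directly from the definition by restricting admissible sequences to sub-opens and concatenating admissible sequences on disjoint opens. The main obstacle is subadditivity, which I will handle via the classical De Giorgi slicing argument. Fix $\varepsilon>0$ and near-optimal sequences $u_h^{(A)}\to u$ in $L^\px(A;\R^m)$ and $u_h^{(B)}\to u$ in $L^\px(B;\R^m)$; for nested opens $A''\subset\subset A'\subset\subset A$, slice the annulus $A'\setminus A''$ into $N$ equal strips $S_j$ with smooth cutoffs $\phi_j$ satisfying $|\nabla\phi_j|\le CN$ and arranged to localize the transition to $S_j$. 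Define $w_h^{(j)}:=\phi_j u_h^{(A)}+(1-\phi_j)u_h^{(B)}$; by \Hthree{},
\[
f(\nabla w_h^{(j)})^{p(x)}\lesssim 1+|\nabla u_h^{(A)}|^{p(x)}+|\nabla u_h^{(B)}|^{p(x)}+N^{p^+}|u_h^{(A)}-u_h^{(B)}|^{p(x)} \quad \text{on }S_j.
\]
Choosing $j$ to minimize the strip energy absorbs the gluing contribution into $1/N$ times a uniformly bounded quantity, and the cross term vanishes as $h\to\infty$ by $L^\px$-convergence on $A\cap B$. Sending $h\to\infty$, then $N\to\infty$ and $A'',A'\nearrow A$, yields subadditivity.

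Inner regularity follows from \eqref{estFcal} together with Theorem~\ref{thm:equivBVs}: the latter shows $V^m_\px(u;E)\approx |Du|(Y\cap E)+\rho_\px(\nabla u;(\Omega\setminus Y)\cap E)=:\nu(E)$ where $\nu$ is a finite Radon measure on $\Omega$. Since $\nu(\Omega)<\infty$ and $p^+<\infty$, we have $V^m_\px(u;A)^{p^+}\le V^m_\px(u;\Omega)^{p^+-1}V^m_\px(u;A)$, so the right-hand side of \eqref{estFcal} is dominated by the finite Radon measure $\tilde\mu:=\mathcal L^n+(1+\nu(\Omega)^{p^+-1})\,\nu$. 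Given any open $A$ and $\varepsilon>0$, pick $A''\subset\subset A'''\subset\subset A$ with $\tilde\mu(A\setminus\overline{A''})<\varepsilon$; then $A\subset A'''\cup(A\setminus\overline{A''})$ and subadditivity gives $\lambda(A)\le\lambda(A''')+C\varepsilon$, whence inner regularity. De Giorgi--Letta now produces the finite Borel measure extension of $\lambda$, with total mass bounded by \eqref{estFcal} on $\Omega$.
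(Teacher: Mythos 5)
Your proof is correct in outline and follows the paper's overall strategy: the bound \eqref{estFcal} via the smooth approximation from Lemmas~\ref{lem:density0} and \ref{lem:BV-gradient}, then the De Giorgi--Letta criterion with a cutoff gluing argument. Step~1 matches the paper essentially verbatim. The one genuinely different tool is how you kill the energy over the transition annulus. You use the De Giorgi slicing/averaging device: cut the annulus into $N$ strips, glue in each, and observe that the average strip contribution is $O(1/N)$ times a quantity bounded uniformly in $h$, so sending $N\to\infty$ removes it. The paper instead uses a single cutoff $\zeta_\delta$ concentrated near $\partial A_\eta$, appeals to weak$^*$ compactness of the Radon measures $\nu_h=(1+|\nabla u_{1,h}|^{p(\cdot)}+|\nabla u_{2,h}|^{p(\cdot)})\,\mathcal L^n$ to extract a limit $\nu$, and selects $\eta$ so that $\nu(\partial A_\eta)=0$. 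Both devices are standard; yours is arguably more elementary (no compactness of measures and no choice of good level set), while the paper's is slightly shorter to write down.

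Two points deserve tightening. First, you claim Theorem~\ref{thm:equivBVs} gives $V^m_\px(u;E)\approx\nu(E)$ for a finite Radon measure $\nu$; it actually gives $V^m_\px(u;\Omega)\approx\|u\|_{\rho_{\mathrm{old}}}$, a Luxemburg-type \emph{norm}, not the modular $\rho_{\mathrm{old}}(u;\cdot)$, and the norm is not $\sigma$-additive. The paper sidesteps this by noting that the modular $\rho_{\mathrm{old}}(u)$ is a finite measure and that its finiteness forces $\|u\|_{\rho_{\mathrm{old}},F}\to 0$ as $F\to\emptyset$; your inner regularity argument goes through once phrased this way, but the literal domination by a finite Radon measure $\tilde\mu$ is not available. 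Second, your slicing paragraph really establishes the fundamental estimate $\mathcal F(u,A''\cup B)\le\mathcal F(u,A')+\mathcal F(u,B)+o(1)$ for $A''\subset\subset A'\subset\subset A$; calling this ``subadditivity'' and then using it to get inner regularity (which is itself used to upgrade the fundamental estimate to full subadditivity) compresses the logical order. The paper keeps this clean by proving $\mathcal F(u,A)\le\mathcal F(u,A_1)+\mathcal F(u,A\setminus\overline{A_2})$ first and deriving (iii) before (i); you should make the same separation explicit.
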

\begin{proof}
Since $p$ is $\log$-Hölder continuous, we can choose functions 
$u_h \in W^{1,\px}(A;\R^m)\cap C^\infty(A; \R^m)$ with $u_h \to u$ in $L^\px(A; \R^m)$ and 
\[
\|\nabla u_h\|_{L^\px(A;\R^m)} \approx 
\lim V^m_\px(u_h; A)\le c V^m_\px(u; A)
\] 
by Lemmas~\ref{lem:density0} and \ref{lem:BV-gradient}(2). 
Assumption \Hthree{} and \cite[Lemma~3.2.9]{HarH19} imply that 
\[
\begin{split}
\mathcal F(u,A) 
&\le \liminf_h \int_A f(\nabla u_h)^{p(x)}\,dx \le (2M)^{p^+} \liminf_h \int_A 1 + |\nabla u_h|^{p(x)}\,dx \\
&\le \liminf_h \Big({\mathcal L}^n (A)+ \max\Big\{\|\nabla u_h\|_{L^\px(A;\R^m)}, 
\|\nabla u_h\|_{L^\px(A;\R^m)}^{p^+}\Big\} \Big)\\
& \le C(p^+) \big({\mathcal L}^n (A) + V^m_\px(u; A)+ V^m_\px(u; A)^{p^+}\big).
\end{split}
\] 
This is the desired inequality. 

It remains to show that $\mathcal F(u,\cdot)$ is the restriction of a Radon measure for every $u\in \BV^{p(\cdot)}(\Omega;\R^m)$.
We will establish this by checking the 
De Giorgi--Letta criteria: 
every increasing, bounded set function $\mu: \mathcal A \to[0,\infty)$ defined on the open subsets 
$\mathcal A$ of $\Omega$ such that
\begin{itemize}
\item[(i)] $A, A' \in {\mathcal A}\ \Rightarrow \ 
\mu(A\cup A')\le \mu(A) + \mu(A')$,
\item[(ii)] $A, A' \in {\mathcal A}, A \cap A' =\emptyset \ \Rightarrow \ 
\mu (A\cup A')\geq \mu (A)+\mu(A')$, 
\item[(iii)] $\mu(A)=\sup\{\mu(A') \mid A'\in \mathcal A, A'\subset \subset A\},$
\end{itemize}
can be extended to Borel sets $B\subset \Omega$ by
$\mu(B)=\inf\{\mu(A) \mid B\subset A \in \mathcal A\}$ as a positive Radon 
measure \cite[Theorem~1.53]{AmbFP00}.

If $u\in \BV^{p(\cdot)}(\Omega;\R^m)$, then the previously shown inequality ensures that 
$\mathcal F(u,A)$ is finite; since $f\ge 0$, it is increasing and non-negative. 
The superadditivity condition (ii) is a consequence of the definition of $\mathcal F$ 
since $u_h\in W^{1,\px}(A\cup A'; \R^m)$ if and only if 
$u_h|_{A}\in W^{1,\px}(A; \R^m)$ and $u_h|_{A'}\in W^{1,\px}(A'; \R^m)$ 
for disjoint and open $A$ and $A'$. 

Let $A_2 \subset \subset A_1 \subset  A$. 
We will prove that 
\begin{align}\label{claim1}
\mathcal F(u,A) \le 
\mathcal F(u,A_1) + \mathcal F(u, A \setminus \overline A_2),
\end{align}
but first we show how this inequality implies (i) and (iii). 
By \eqref{estFcal} and Theorem~\ref{thm:equivBVs}, we obtain
\[
\mathcal F(u,A) 
\le 
C\big({\mathcal L}^n (A) + V^m_\px(u; A)+ V^m_\px(u; A)^{p^+}\big)
\le 
C\big(|A| + \|u\|_{\rho_{\text{\rm old}}, A}+ \|u\|_{\rho_{\text{\rm old}}, A}^{p^+}\big).
\]
Since $p^+<\infty$, the modular $\rho_{\text{\rm old}}(u)$ over $A$ is finite, and hence from the definition of $\rho_{\text{\rm old}}$ we obtain that $ \|u\|_{\rho_{\text{\rm old}}, F} \to 0$ as $F \to \emptyset$. 
Since $A$ is open, $A \setminus \bar A_2 \to \emptyset$ as  $A_2 \nearrow A$.
Then (iii) follows by $\mathcal F(u,A) \le \mathcal F(u,A_1) + \mathcal F(u, A \setminus \overline A_2)$ as $A_2 \nearrow A$.

To show (i), we replace $A$ by $A\cup A'$ in \eqref{claim1},  
choose $A_1 \subset \subset A$ and use monotonicity:
\[
\mathcal F(u, A \cup A')
\le 
\mathcal F(u, A_1)+ \mathcal F(u, (A \cup A')\setminus \overline A_2)
\le 
\mathcal F(u,A)+ \mathcal F(u, (A \cup A')\setminus \overline A_2).
\]
Then (i) follows by (iii) as $A_2\nearrow A$.

It remains to prove \eqref{claim1}.
By the definition of $\mathcal F$ and a diagonal argument we choose sequences 
$\{u_{1,h}\}\subset  W^{1,\px}(A_1;\R^m)$ and 
$\{u_{2,h}\}\subset W^{1,\px}(A \setminus \overline A_2;\R^m)$ that converge to 
$u$ in $L^1(A_1;\R^m)$ and $L^1(A \setminus \overline A_2;\R^m)$, respectively, with
\begin{equation}\label{1}
\lim_ h \int_{A_1} f(\nabla u_{1,h})^{p(x)}\,dx = \mathcal F(u, A_1)
\quad
\text{and} 
\quad
\lim_ h \int_{A \setminus \overline A_2} f(\nabla u_{2,h})^{p(x)}\,dx = \mathcal F(u, A \setminus \overline A_2).
\end{equation}

For $t \in (0, 1)$, let $A_t := \{x \in A_1 \mid {\rm dist}(x, \partial A_1) > t\}$. 
Let $\eta \in (0, \dist(\partial A_1, \partial A_2))$ to be chosen later. 
Consider a smooth cut-off function $\zeta_\delta \in  C^\infty_0(A_{\eta-\delta};[0,1])$
such that $\zeta_\delta = 1$ on $A_\eta$ and
$\|\nabla \zeta_\delta\|_{L^\infty(A_{\eta-\delta})}\le \frac{C}\delta$.
The sequence $v_{h,\delta}:= \zeta_\delta  u_{1,h}+ (1-\zeta_\delta)u_{2,h}$ 
converges to $u$ in $L^\px(A;\R^m)$ as $h\to\infty$. 
Since $\nabla \zeta_\delta \ne 0$ only in $A_{\eta-\delta}\setminus A_\eta$, we obtain
\begin{align*}
\int_{A} f(\nabla v_{h,\delta})^{p(x)}\,dx 
& \le \int_{A_{\eta}}f(\nabla u_{1,h})^{p(x)}\,dx + \int_{A \setminus A_{\eta -\delta}} f(\nabla u_{2,h})^{p(x)}\,dx \\
& \qquad + \int_{A_{\eta-\delta}\setminus A_\eta}  f(\nabla v_{h,\delta})^{p(x)}\,dx.  
\end{align*}
In the first two integral on the right the set can be increased to $A_1$ and $A\setminus \overline 
{A_2}$ since $f\ge 0$. 
For the last term we use \Hthree{} and 
$\nabla v_{h,\delta}= \zeta_\delta  \nabla u_{1,h}+ (1-\zeta_\delta) \nabla u_{2,h} +(u_{1,h}- u_{2, h}) \nabla \zeta_\delta$ to obtain
\begin{align*}
f(\nabla v_{h,\delta})^{p(x)}
&\le  M^{p(x)} (1 + |\nabla v_{h,\delta}|)^{p(x)}  \\
&\le  C(p^+) \big(1 + |\nabla u_{1,h}|^{p(x)}+ |\nabla u_{2,h}|^{p(x)}+ (\tfrac{1}\delta|u_{1,h}- u_{2,h}|)^{p(x)}\big)
\end{align*}
Thus 
\begin{align*}
\int_{A} f(\nabla v_{h,\delta})^{p(x)}\,dx 
& \le \int_{A_1}f(\nabla u_{1,h})^{p(x)}\,dx + \int_{A \setminus A_2} f(\nabla u_{2,h})^{p(x)}\,dx 
\\
&\qquad + C(p^+) \nu_h(A_{\eta-\delta}\setminus A_\eta)
+ \int_{A_1\setminus A_2} (\tfrac{1}\delta|u_{1,h}- u_{2,h}|)^{p(x)}\, dx. 
\end{align*}
where, for $E \subset \R^n$, 
\[ 
\nu_h(E):= \int_{E \cap (A_1 \setminus A_2)}  1 + |\nabla u_{1,h}|^{p(x)}+ |\nabla u_{2,h}|^{p(x)} \,dx. 
\] 
Passing to the limit as $h \to \infty$ and observing that $u_{1,h}- u_{2,h} \to u-u=0$ in $L^\px(A_1 \setminus A_2;\R^m)$, we obtain that 
\begin{align*}
\mathcal F(u, A)&\le \mathcal F(u, A_1) + \mathcal F(u, A \setminus \overline A_2)
+C(p^+) \limsup_{h \to 0} \nu_h(A_{\eta-\delta}\setminus A_\eta).
\end{align*}
We complete the proof of \eqref{claim1} by showing that the last term tends to 
zero when $\delta\to 0$. 

By the properties of Lebesgue integral $\nu_h$ is a Radon measure. By \eqref{1} and \Htwo{}, 
$\sup_h \nu_h(K) <\infty$ for every compact $K \subset \R^n$.
By the weak compactness of measures \cite[Theorem~1.41]{EvaG15}, there exists a subsequence, still denoted by $(\nu_h)$, and a Radon measure 
$\nu$ such that
\[
\limsup_{h \to \infty} \nu_h(K) \le \nu(K)
\]
for all compact sets $K \subset \Rn$, so that in particular
\[
\limsup_{h \to 0} \nu_h(A_{\eta-\delta}\setminus A_\eta) \le 
\nu (\overline{A_{\eta-\delta}\setminus A_\eta}).
\]
Let $(\delta_i)$ be a positive sequence converging to zero.
Since $\overline{A_{\eta-\delta_i}\setminus A_\eta}$ is a decreasing sequence of sets, we obtain
\[
\limsup_{i \to \infty} \nu(\overline{A_{\eta-\delta_i}\setminus A_\eta})= \nu\Big(\bigcap_{i=1}^\infty \overline{A_{\eta-\delta_i}\setminus A_\eta}\Big ) = \nu(\partial A_\eta).  
\]
Since $\nu(\cup_{\eta>0} \partial A_\eta \cap A_1)\le \nu(A_1)<\infty$, 
we can fix $\eta$ such that $\nu(\partial A_\eta)=0$.
\end{proof}

\begin{lemma}\label{lem:ubrelaxation}
Let $\Omega$ be a bounded open set with Lipschitz boundary, let $p$ be strongly $\log$-Hölder continuous, and 
let $f$ satisfy \Hzero{}, \Hone{}, \Htwo{} and \Hthree{}. Then
\[
\mathcal F(u,\Omega)\le 
\int_\Omega f(\nabla u)^{p(x)} \,dx +
\int_\Omega f^\infty\Big(\frac{d D^s u}{d |D^s u|}\Big) \, d|D^s u|
\]
for all $u \in \BV^\px(\Omega; \R^m)$.
\end{lemma}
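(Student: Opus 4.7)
The plan is to exploit the measure structure established in Proposition~\ref{prop:Fcalmeas} and produce two separate density bounds. Writing $\nu$ for the right-hand side measure
\[
\nu(A) := \int_A f(\nabla u)^{p(x)}\,dx + \int_{A\cap Y} f^\infty\Big(\tfrac{d D^s u}{d |D^s u|}\Big)\,d|D^s u|,
\]
which is finite by \Hthree{} and whose singular part is supported in $Y$ by Theorem~\ref{thm:equivBVs}, it suffices to compare the Radon--Nikodym derivatives of $\mathcal F(u,\cdot)$ and $\nu$ with respect to $\mathcal L^n$ on $\Omega$ and with respect to $|D^s u|$ on $Y$. The absolutely-continuous-to-singular decomposition of $Du$ decouples the two cases.

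For the bulk density, I would show that for $\mathcal L^n$-a.e.\ $x_0 \in \Omega$,
\[
\limsup_{r\to 0^+}\frac{\mathcal F(u,B_r(x_0))}{|B_r(x_0)|}\le f(\nabla u(x_0))^{p(x_0)}.
\]
At a common Lebesgue point of $\nabla u$, $p$, and $f(\nabla u)^{p(\cdot)}$ where $|D^s u|$ has vanishing density, I would take as competitor the affine function $\ell(x)=u(x_0)+\nabla u(x_0)(x-x_0)$ glued to $u$ in a thin annulus via a cut-off in the spirit of the gluing in Proposition~\ref{prop:Fcalmeas}. Log-Hölder continuity of $p$ ``freezes'' the exponent on $B_r(x_0)$ up to a multiplicative error of the form $r^{p^+_{B_r}-p^-_{B_r}}=1+o(1)$, yielding $\int_{B_r(x_0)}f(\nabla \ell)^{p(x)}dx = f(\nabla u(x_0))^{p(x_0)}|B_r|(1+o(1))$, and the gluing error is absorbed using the Lebesgue point property and \Hthree{}.

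For the singular density, I would show that for $|D^s u|$-a.e.\ $x_0\in Y$,
\[
\limsup_{r\to 0^+}\frac{\mathcal F(u,B_r(x_0))}{|D^s u|(B_r(x_0))}\le f^\infty\Big(\tfrac{dD^s u}{d|D^s u|}(x_0)\Big).
\]
Here the natural construction is local mollification $u_{\delta}=u*\eta_\delta$ on $B_r(x_0)$ (with $\delta\ll r$ and a Lipschitz extension of $u$ near $\partial\Omega$ used to define the mollification where needed). The crucial point is the pointwise estimate
\[
f(\nabla u_\delta(x))^{p(x)} \le f(\nabla u_\delta(x))\bigl(1+M(1+|\nabla u_\delta(x)|)\bigr)^{p(x)-1},
\]
combined with the bound $|\nabla u_\delta(x)|\lesssim \delta^{-n}|Du|(B_\delta(x))$. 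For $x\in B_r(x_0)$ we have $\operatorname{dist}(x,Y)\le r$, and \emph{strong} $\log$-H\"older continuity gives $p(x)-1\le \omega(r)/\log(e+1/r)$, so provided $\delta\ge r^C$ for a universal $C$, the factor $(1+M(1+|\nabla u_\delta|))^{p(x)-1}$ tends to $1$ uniformly in $x$, reducing the energy to the linear-growth integral $\int_{B_r(x_0)} f(\nabla u_\delta)\,dx$. One then applies Reshetnyak continuity (Lemma~\ref{lem:Reshetnyak}) to the $\R^{m\times n}$-valued measures $\nabla u_\delta\,\mathcal L^n \to Du$ with the positively $1$-homogeneous extension of $f^\infty$ (which coincides with the strong recession function on rank-one directions by Remark~\ref{rem:recession}), obtaining the singular density bound after dividing by $|D^s u|(B_r(x_0))$ and sending $r\to 0^+$ at a tangent point.

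The main obstacle is the calibration between the scales $r$ (over which density ratios are taken) and $\delta$ (of mollification) in the second step: $\delta$ must be small enough that mollification of $Du$ has essentially concentrated on its support, but not so small that $|\nabla u_\delta|^{p(x)-1}$ fails to tend to $1$. This balance is exactly what strong $\log$-H\"older continuity (as opposed to ordinary $\log$-H\"older) permits, since it provides the decay $p(x)-1=o(1/\log(1/r))$ near $Y$ that beats the $|\nabla u_\delta|\lesssim \delta^{-n}$ blow-up. Once the two density bounds are in hand, the Radon--Nikodym theorem yields $\mathcal F(u,\cdot)\le\nu$ as measures, which evaluated on $\Omega$ is the stated inequality.
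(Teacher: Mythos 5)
Your overall plan---use the measure structure of Proposition~\ref{prop:Fcalmeas}, decompose via Radon--Nikodym into an absolutely continuous and a singular density bound, and exploit strong $\log$-H\"older continuity and Reshetnyak's theorem for the singular part---is the right strategy, and the singular part of your argument has the same key ingredients as the paper's (mollification, the bound $|\nabla u_\delta|\lesssim \delta^{-n}$, strong $\log$-H\"older killing the exponent growth, Reshetnyak continuity and Alberti's theorem via Remark~\ref{rem:recession}). The paper gets the same effect without your $\delta\gtrsim r^C$ calibration: it performs a single global mollification, splits $A$ into $A\setminus Y^\delta$ and $Y^\delta\cap A$ and, since points of $Y^\delta$ are within distance $\delta$ of $Y$, the strong $\log$-H\"older estimate gives $(c\delta^{-\alpha})^{p(x)-1}\le e^{c\omega(\delta)}\to 1$ directly, with only one scale to send to zero. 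The density estimates and Lebesgue decomposition then come in \emph{after} the global bounds \eqref{eq:acEst} and its refinement are established, rather than driving the argument.

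There is, however, a genuine gap in your absolutely continuous density estimate. You propose to bound
$\limsup_{r\to 0}\mathcal F(u,B_r(x_0))/|B_r|$ by taking as competitor ``the affine function $\ell(x)=u(x_0)+\nabla u(x_0)(x-x_0)$ glued to $u$ in a thin annulus via a cut-off.'' But $\mathcal F(u,B_r(x_0))$ is an infimum over sequences $u_h\in W^{1,\px}(B_r;\R^m)$ with $u_h\to u$ in $L^\px(B_r;\R^m)$; the affine function is a fixed function that does not converge to $u$, and the glued function $\zeta\ell+(1-\zeta)u$ is only $\BV$ (not Sobolev) and converges to $\zeta\ell+(1-\zeta)u\ne u$. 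The gluing trick in Proposition~\ref{prop:Fcalmeas} works there because \emph{both} spliced sequences recover the same target $u$, so the interface term vanishes; here the two pieces recover different limits and the competitor fails to be admissible. This affine-competitor device belongs to the blow-up proof of the \emph{lower} bound (as in Proposition~\ref{prop:Fqc}); for the \emph{upper} bound the admissible recovery sequence is the mollification $u_\delta=u*\eta_\delta$, as the paper does, and then the Lipschitz estimate \eqref{fpxLip} together with $\nabla u*\eta_\delta\to\nabla u$ in $L^\px$ handles the bulk term without any pointwise blow-up at all.
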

\begin{proof}
We adapt the strategy of \cite[Proposition 5.49]{AmbFP00}. 
Recall that $Y:=\{p=1\}$ and denote $E^\delta:=\{x \in \mathbb R^n: \dist(x, E)\le  \delta\}$
for $E \subset \mathbb R^n$ and $\delta >0$.

Fix $u \in \BV^{p(\cdot)}(\Omega;\R^m)$. Let $\delta>0$ and consider the standard mollifiers 
$\eta_\delta$, and denote $u_\delta:= u * \eta_\delta$, so that
$\nabla u_\delta:= \nabla u \ast \eta_\delta+ D^su \ast \eta_\delta$.
Let $A \subset \subset \Omega$ be open and 
observe that $p(x) >1$ in 
$A \setminus Y^\delta$ and thus also $\nabla u_\delta= \nabla u \ast \eta_\delta$. 
Hence 
\[
\int_A f(\nabla u_\delta)^{p(x)}\,dx \le 
\int_{A \setminus Y^\delta} f(\nabla u*\eta_\delta)^{p(x)}\,dx 
+ \int_{Y^\delta \cap A } f(\nabla u_\delta)^{p(x)}\,dx. 
\]
We estimate the two integrals on the right-hand side in the next two paragraphs. 

Since $f$ satisfies \Hone{} and \Hthree{},  
\begin{align}\label{fpxLip}
\big|f(\xi)^{q}- f(\eta)^{q}\big|\le C(\bar q,M)(1+ |\xi|^{q-1}+ |\eta|^{q-1})|\xi-\eta|,
\end{align}
for every $\bar q\ge 1$, $q\in [1, \bar q]$ and $\xi, \eta \in \mathbb R^{n\times m}$ \cite[Lemma~5.42]{AmbFP00}.
Using this with $q=p(x)$, we find by Hölder's inequality that 
\begin{align*}
&\int_{A \setminus Y} \big|f(\nabla u)^{p(x)}-f(\nabla u *\eta_\delta)^{p(x)}\big|\,dx \\
&\qquad\lesssim
\int_{A \setminus Y} (1+|\nabla u|^{p(x)-1}+|\nabla u *\eta_\delta|^{p(x)-1})
\,|\nabla u -\nabla u *\eta_\delta|\,dx \\
&\qquad\lesssim
\big(1+\|\nabla u\|_\px + \|\nabla u *\eta_\delta\|_\px\big) \|\nabla u-\nabla u *\eta_\delta\|_\px.
\end{align*}
Since $\nabla u*\eta_\delta \to \nabla u$ in $L^\px(A\setminus Y; \R^m)$ 
\cite[Theorem~4.6.4]{DieHHR11}, the right-hand side above converges to zero as $\delta\to 0$. 
Thus
\[
\limsup_{\delta \to 0} \int_{A \setminus Y^\delta} f(\nabla u*\eta_\delta)^{p(x)}\,dx 
\le 
\limsup_{\delta \to 0} \int_{A \setminus Y} f(\nabla u *\eta_\delta)^{p(x)}\,dx 
=\int_{A \setminus Y} f(\nabla u)^{p(x)}\,dx.
\]

We then consider the second integral, over $Y^\delta\cap A$. 
There exists $c>0$ such that $|\nabla u_\delta|\le \|u\|_1 \sup |\nabla \eta_h|
\le c \delta^{-1}$. Hence,  by \Hthree{}, 
\[
f(\nabla u_\delta)^{p(x)}
= f(\nabla u_\delta)^{p(x)-1} f(\nabla u_\delta)
\le
(c\delta^{-1})^{p(x)-1} f(\nabla u_\delta)
\]
for all small $\delta$. Denote
\[
\omega(\delta):=\sup_{x\in Y^\delta} (p(x)-1)\log\frac1{\dist(x, Y)}.
\]
By the strong $\log$-Hölder assumption, $\omega(0^+)=0$ and so $(c\delta^{-1})^{p(x)-1}
\le e^{2(p(x)-1)\log 1/\delta}
\le e^{2\omega(\delta)}\to 1$
when $\delta\le \frac1c$. 
Hence 
\[
\int_{Y^\delta\cap A} f(\nabla u_\delta)^{p(x)}\,dx
\le e^{2\omega(\delta)}\int_{Y^\delta\cap A}f(\nabla u_\delta) \,dx.
\]
Now estimating the integral on the right hand side as in the proof of \cite[Theorem~5.49]{AmbFP00}, 
i.e.\ using \eqref{fpxLip} with $q=1$ and \cite[Theorem~2.2(b)]{AmbFP00}, we have
\begin{align*}
&\limsup_{\delta\to 0}\int_{Y^\delta\cap A} f(\nabla u_\delta)\,dx\\
&\qquad\le 
\limsup_{\delta\to 0}\bigg[\int_{Y^\delta\cap A} f(\nabla u \ast \eta_\delta)\,dx + M\,|D^s u \ast \eta_\delta|(Y^\delta\cap A)\bigg]\\
&\qquad\le 
\limsup_{\delta\to 0} \bigg[\int_{(Y^\delta\cap A)^\delta} f(\nabla u) + c\,|\nabla u*\eta_\delta-\nabla u|\,dx + M\,|D^s u|\big((Y^\delta\cap A)^\delta\big)\bigg] \\
&\qquad= 
\int_{Y\cap A} f(\nabla u) \,dx  + M\,|D^s u|(\overline A).
\end{align*}
Since $p=1$ in $Y$, the integrand on the right can be replaced by $f(\nabla u)^{p(x)}$.

Combining the estimates over $A\setminus Y^\delta$ and $Y^\delta\cap A$ and assuming 
that $|D^s u|(\partial A)=0$ we obtain that 
\begin{equation}\label{eq:acEst}
\mathcal F(u,A)\le \int_A f(\nabla u)^{p(x)}\,dx + C\,|D^s u|(A).
\end{equation}
From the proof of Proposition~\ref{prop:Fcalmeas} we know that 
$\mathcal F(u,A) = \sup_{B\subset A \text{ open}}\mathcal F(u,B)$. Since 
the sets $B$ can be chosen with $|D^s u|(\partial B)=0$ (as in the proof of the proposition), 
\eqref{eq:acEst} holds for all open $A\subset \Omega$. By the proposition, 
$\mathcal F(u,\cdot)$ extends to a Radon measure and it follows from its Lebesgue 
decomposition $\mathcal F=\mathcal F^a+\mathcal F^s$ that 
\begin{align*}
\mathcal F^a(u,B)\le \int_B f(\nabla u)^{p(x)}\,dx
\quad\text{and}\quad
\mathcal F^s(u,B)\le C\,|D^s u|(A)
\end{align*}
for every Borel set $B$, since $D^su$ and the Lebesgue measure are mutually singular. 

We still need a better estimate for $\mathcal F^s$; we use a similar strategy but estimate 
$\int_{Y^\delta\cap A} f(\nabla u_\delta)\,dx$ in a different way. 
Define $g(\xi):= \sup_{t >0}\frac{f(t\xi)- f(0)}{t} = \sup_{t >0}\frac{f(t\xi)}{t}$. 
Since $f$ is Lipschitz \cite[Lemma~5.42]{AmbFP00}, it follows that $g$ is, as well. 
By \Htwo{} and \Hthree{}, $m\,|\xi|\le g(\xi)\le M\,|\xi|$. 
We note that $f(\xi)\le g(\xi)=g(\frac \xi{|\xi|})|\xi|$ and 
use Lemma~\ref{lem:Reshetnyak} (Reshetnyak continuity) 
for the continuous and bounded function $g$ to conclude that
\begin{align*}
\liminf_{\delta\to 0} 
\int_{Y^\delta\cap A} f(\nabla u_\delta)\,dx 
&\le
\limsup_{\delta\to 0} \int_{Y^\delta\cap A} g\Big(\frac{\nabla u_\delta}{|\nabla u_\delta|}\Big)\, |\nabla u_\delta|\,dx 
= 
\int_{Y^{\delta_1}\cap A}g\Big(\frac{d D u}{d |D u|} \Big)\,d |D u| 
\end{align*}
for any $\delta_1>0$. 
We next use that $d|D u|= |\nabla u| \,dx + d|D^su|$ and that 
$\frac{d Du}{d |Du|}=\frac{d D^s u}{d |D^s u|}$ in the support of $D^su$ which is 
a subset of $Y$. Thus 
\[
\int_{Y^{\delta_1}\cap A}g\Big(\frac{d D u}{d |D u|} \Big)\,d |D u| 
=
\int_{Y^{\delta_1}\cap A}g\Big(\frac{d D u}{d |D u|} \Big)\, |\nabla u|\,dx + 
\int_{Y\cap A}g\Big(\frac{d D^s u}{d |D^s u|}\Big)\,d|D^s u|
\]
Since $f$ is quasiconvex, it and also $g$ is convex along rank-one directions 
\cite[pp.~299--300]{AmbFP00}; thus $g(\xi)=\lim_{t\to \infty}\frac{f(t\xi)}{t}=f^\infty(\xi)$ 
when $\xi$ is a rank-one matrix.
By Alberti's theorem, $\frac{d D^s u}{d |D^s u|}$ is a rank-one matrix \cite{A93}. 
Furthermore, $g(\xi)\le M$ when $|\xi|\le 1$. Hence we can continue our previous estimate 
by 
\begin{align*}
\int_{Y^{\delta_1}\cap A}g\Big(\frac{d D u}{d |D u|} \Big)\,d|D u| 
\le 
M\int_{Y^{\delta_1}\cap A} |\nabla u|\,dx 
+ \int_{Y\cap A}f^\infty\Big(\frac{d D^s u}{d |D^s u|} \Big)\,d|D^s u|.
\end{align*}

Combining the estimates from the previous paragraph with the earlier estimate over 
$A\setminus Y^{\delta_1}$ and assuming that $|D^s u|(\partial A)=0$ we obtain as $\delta_1\to 0$ that 
\[
\mathcal F(u,A)
\le 
C\int_A f(\nabla u)^{p(x)}+1\,dx + \int_{Y\cap A}f^\infty\Big(\frac{d D^s u}{d |D^s u|} \Big)\,d|D^s u|.
\]
In the same way as with \eqref{eq:acEst}, we conclude from this that 
\[
\mathcal F^s(u,B)\le \int_B f^\infty\Big(\frac{d D^s u}{d |D^s u|}\Big)\,d|D^s u|.
\qedhere
\]
\end{proof}

We are now ready to prove the main result. 

\begin{proof}[Proof of Theorem~\ref{thm:main}]
It follows from Lemma~\ref{lem:ubrelaxation} that 
\[
\mathcal F(u)\le \int_\Omega f(\nabla u)^{p(x)} \,dx +\int_\Omega f^\infty\Big(\frac{d D^s u}{d |D^s u|}\Big)\,d|D^s u|
\]
for all $u \in \BV^\px(\Omega; \R^m)$.

For the opposite inequality let $u\in \BV^\px(\Omega; \R^m)$ and $u_h \in W^{1,\px}(\Omega;\R^m)$ with 
$u_h \to u$ in $L^{\px}(\Omega;\R^m)$. By Proposition~\ref{prop:Fqc} and 
$u_h \in W^{1,\px}(\Omega;\R^m)$, 
\[
F_{qc}(u)
\le 
\liminf_{h\to \infty} F_{qc}(u_h)
=
\liminf_{h\to \infty} \int_\Omega f(\nabla u_h)^{p(x)}\, dx. 
\]
Taking the infimum over such sequences yields $F_{qc}(u)\le \mathcal F(u)$, as required. 
\end{proof}

The next result shows that the test-functions from $\mathcal F$ can alternatively be 
taken to be smooth. 
Note that uniform domains, including bounded domains with Lipschitz boundary, 
are extension domains \cite[Theorem~8.5.12]{DieHHR11}.

\begin{proposition}\label{lemmaC1} 
Let $\Omega \subset \mathbb R^n$ be a $W^{1,\px}$-extension domain, $p$ be $\log$-Hölder continuous and 
$f:\R^{n \times m} \to [0,\infty)$ satisfy \Hzero{}, \Htwo{}{} 
and \Hthree{}. 
Then 
\[
\mathcal F(u)
=
\inf\Big\{\liminf_{h\to \infty}\int_\Omega f(\nabla u_h)^{p(x)}\, dx \,\Big|\, 
u_h \in C^\infty(\overline \Omega;\R^m), u_h \to u \text{ in } L^{1}(\Omega;\R^m) \Big\},
\]
for every $u\in \BV^\px(\Omega;\R^m)$.
\end{proposition}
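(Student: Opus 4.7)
Denote by $\mathcal F^\infty_{C,L^1}(u)$ the right-hand side of the claimed identity. The plan is to prove the two inequalities separately, and the main tool throughout is that, under our hypotheses, $C^\infty(\overline\Omega;\R^m)$ is dense in $W^{1,\px}(\Omega;\R^m)$; this follows from the $W^{1,\px}$-extension-domain property of $\Omega$ combined with the $\log$-Hölder continuity of $p$, via standard density results in variable exponent Sobolev spaces \cite{DieHHR11}.

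\emph{Inequality $\mathcal F^\infty_{C,L^1}(u)\le \mathcal F(u)$.} Choose a minimizing sequence $(u_h)\subset W^{1,\px}(\Omega;\R^m)$ with $u_h\to u$ in $L^\px$ and $\int_\Omega f(\nabla u_h)^{p(x)}\,dx\to\mathcal F(u)$. By density, pick $v_h\in C^\infty(\overline\Omega;\R^m)$ with $\|v_h-u_h\|_{W^{1,\px}(\Omega;\R^m)}\le 1/h$, so that $v_h\to u$ in $L^\px$ and hence in $L^1$. To compare the energies I would use the pointwise estimate \eqref{fpxLip} with $q=p(x)\in[1,p^+]$ and then variable exponent Hölder's inequality with dual pair $(p(\cdot),p'(\cdot))$:
\[
\Big|\int_\Omega f(\nabla v_h)^{p(x)}-f(\nabla u_h)^{p(x)}\,dx\Big|\lesssim \big(1+\|\nabla v_h\|_\px^{p^+-1}+\|\nabla u_h\|_\px^{p^+-1}\big)\,\|\nabla v_h-\nabla u_h\|_\px.
\]
The prefactor is uniformly bounded in $h$ (by \Htwo{} and the energy bound on $u_h$), while the last factor vanishes by construction. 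A diagonal extraction then yields $\mathcal F^\infty_{C,L^1}(u)\le \mathcal F(u)$.

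\emph{Inequality $\mathcal F(u)\le \mathcal F^\infty_{C,L^1}(u)$.} Take $(u_h)\subset C^\infty(\overline\Omega;\R^m)$ with $u_h\to u$ in $L^1$ realizing $\mathcal F^\infty_{C,L^1}(u)$, which we may assume finite. These $u_h$ already lie in $W^{1,\px}(\Omega;\R^m)$ since $\Omega$ is bounded; what is missing is the stronger convergence mode required in the definition of $\mathcal F$. The plan is to upgrade the convergence: by \Htwo{} and the energy bound, $\rho_\px(\nabla u_h)$ and hence $\|\nabla u_h\|_\px$ are uniformly bounded; together with the $L^1$-bound on $u_h$, a variable-exponent Sobolev--Poincaré inequality on extension domains with $\log$-Hölder exponent \cite{DieHHR11} delivers a uniform bound on $\|u_h\|_{L^{p^*(\cdot)}(\Omega;\R^m)}$ for some Sobolev conjugate $p^*(\cdot)>p(\cdot)$. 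Since $u\in L^\px(\Omega;\R^m)$, this higher integrability, combined with a.e.\ convergence extracted from $L^1$-convergence along a subsequence, yields equi-integrability of $\{|u_h-u|^{p(\cdot)}\}$; Vitali's convergence theorem then promotes the convergence to $L^\px$. The sequence $(u_h)$ is thus admissible in the definition of $\mathcal F$, and the inequality follows.

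The main obstacle is this $L^1$-to-$L^\px$ upgrade in the second inequality: it is genuinely delicate when $p^-=1$, since the standard variable exponent Sobolev embedding degenerates on the set $Y$. The joint use of $\log$-Hölder continuity and the extension-domain assumption is precisely what provides the higher-integrability margin needed for the Vitali step to go through uniformly in $h$.
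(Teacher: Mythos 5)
Your proof splits the claim into the two inequalities, which is structurally the same as the paper, but the two halves diverge from the paper's route in interesting ways.

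For the inequality $\mathcal F_{C^\infty}(u)\le\mathcal F(u)$ (your ``first inequality''), you and the paper both start from a minimizing $W^{1,\px}$-sequence, approximate it by $C^\infty(\overline\Omega;\R^m)$ using density under the extension-domain and $\log$-H\"older hypotheses, and conclude by a diagonal argument. The difference is the mechanism for comparing $\int f(\nabla v_h)^{p(x)}$ with $\int f(\nabla u_h)^{p(x)}$: you invoke \eqref{fpxLip} with $q=p(x)$ and then H\"older with the dual pair $(\px,p'(\cdot))$, whereas the paper uses a reverse-Fatou argument based on the nonnegativity of $(2M)^{p^+}(1+|\nabla u_j^k|^{p(x)})-f(\nabla u_j^k)^{p(x)}$ together with modular convergence of $|\nabla u_j^k|^{p(x)}$. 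Your route is plausible, but when $p^-=1$ the exponent $p'(\cdot)$ has $(p')^+=\infty$, so \adec{} fails for $p'(\cdot)$; the passage from the $L^{p'(\cdot)}$ modular of $1+|\nabla v_h|^{p(\cdot)-1}+|\nabla u_h|^{p(\cdot)-1}$ to a clean bound $1+\|\nabla v_h\|_\px^{p^+-1}+\|\nabla u_h\|_\px^{p^+-1}$ is then not a formal norm estimate and needs to be argued directly on $Y$ (where $|\xi|^{p-1}\equiv 1$) and on $\Omega\setminus Y$. The paper's Fatou device sidesteps this entirely and is the cleaner of the two, but your idea does work once the $Y$-part is handled separately.

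For the inequality $\mathcal F(u)\le\mathcal F_{C^\infty}(u)$, the paper's proof is one line: since $C^\infty(\overline\Omega;\R^m)\subset W^{1,\px}(\Omega;\R^m)$, any test sequence for $\mathcal F_{C^\infty}$ is a test sequence for the $L^1$-version of $\mathcal F$, and this $L^1$-version agrees with $\mathcal F$ by Remark~\ref{remfunct=} (i.e., \cite[Proposition~2.8]{ElePZ24}). You instead attempt to re-derive that equivalence from scratch, upgrading $L^1$-convergence to $L^\px$-convergence along a bounded-energy sequence via Sobolev--Poincar\'e and Vitali. This is a genuinely different and considerably heavier route. As written it has a gap: the Sobolev conjugate $p^*(\cdot)=np(\cdot)/(n-p(\cdot))$ that you rely on requires $p^+<n$, which is nowhere assumed in the statement, and the variable-exponent Sobolev embedding you cite is usually formulated under that restriction; to make this work in general you would need to cap the exponent or use a different gain-of-integrability argument. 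You also misdiagnose the delicate region: the Sobolev embedding does \emph{not} degenerate on $Y$ (there $p=1$ and $p^*=n/(n-1)>1$, which is exactly enough for the Vitali step), the obstruction is where $p(x)$ is close to $n$. Given that the paper already has Remark~\ref{remfunct=}/\cite[Proposition~2.8]{ElePZ24} in place, the efficient move is to cite it rather than to re-prove it.
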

\begin{proof}[Proof]
Denote by $\mathcal F_{C^\infty}$ the right-hand side of the claimed equation. 
The  inequality  $\mathcal F(u) \le \mathcal F_{C^\infty}(u)$ follows from 
$C^\infty(\overline\Omega;\R^m) \subset W^{1,\px}(\Omega; \R^m)$. 

For the opposite inequality we follow the ideas from \cite[Proposition~2.6(ii)]{FonM92}. 
Let $u_j \in W^{1,\px}(\Omega;\R^m)$ with $u_j \to u \in L^\px(\Omega;\R^m)$ and 
\[
\mathcal F(u)=\lim_{j\to\infty} \int_\Omega f(\nabla u_j)^{p(x)}\, dx.
\]
By \cite[Theorem~9.1.7]{DieHHR11}, $C^\infty(\overline\Omega)$ is dense in $W^{1,p(\cdot)}(\Omega)$, 
hence, arguing componentwise one can deduce that $C^\infty(\overline{\Omega};\R^m)$ is dense in  
$W^{1,p(\cdot)}(\Omega;\R^m)$.
Thus we can find for every $u_j$ a sequence 
$(u_{j}^{k}) \subset C^\infty_0(\overline \Omega;\R^m)$ with 
$u_j^k \to u_j$ in $W^{1, \px}(\Omega;\R^m)$ 
and $\nabla u_j^k\to \nabla u_j$ almost everywhere. 
By the continuity of $f$ and $p$, we find that $f(\nabla u_j^k(x))^{p(x)} \to f(\nabla u_j(x))^{p(x)}$  
almost everywhere.
Assumption \Hthree{} and $p^+<\infty$ yield that $f(\xi)^{p(x)} \le (M (1+ |\xi|))^{p(x)} \le (2M)^{p^+}(1+ |\xi|^{p(x)})$ and hence 
$(2M)^{p^+}(1+|\nabla u_j^k|^\px) - f(\nabla u_j^k)^\px \ge 0$. Hence Fatou's lemma yields that
\[
\int_{\Omega} (2M)^{p^+}(1+|\nabla u_j|^{p(x)})- f(\nabla u_j)^{p(x)} \,\,dx \le \liminf_{k \to \infty}\int_{\Omega} (2M)^{p^+}(1+|\nabla u_j^k|^{p(x)})- f(\nabla u_j^k)^{p(x)} \,\,dx.
\]
Since $\Omega$ is bounded and $u_j^k \to u_j$ in $W^{1, \px}(\Omega, \R^m)$, 
the first terms cancel and we find that 
\[
\int_{\Omega} f(\nabla u_j)^{p(x)} \,\,dx \ge \limsup_{k \to \infty}\int_{\Omega} f(\nabla u_j^k)^{p(x)} \,\,dx.
\]
For every $j$ we choose $k_j$ such that  $\|u_j - u_j^{k_j}\|_1\le \frac{1}{j}$ and 
\[
\int_{\Omega} f(\nabla u_j^{k_j})^{p(x)} \,\,dx < \int_{\Omega} f(\nabla u_j)^{p(x)} \,\,dx
+ \frac{1}{j}.
\]
Then $(u_j^{k_j})$ converges to $u$ in $L^1(\Omega, \R^m)$
so it is a valid test sequence for $\mathcal F_{C^\infty}$. 
Hence 
\[
\begin{split}
\mathcal F_{C^\infty}(u) & \le
\liminf_{j \to \infty }\int_{\Omega} f(\nabla u_j^{k_j})^{p(x)} \,\,dx 
\le \liminf_{j \to \infty } \Big(\int_{\Omega} f(\nabla u_j)^{p(x)} + \frac1j \Big)
\le \mathcal{F} (u). \qedhere
\end{split}
\]
\end{proof}

{\bf Acknowledgements} The last author is a member of GNAMPA-INdAM whose support is gratefully acknowledged. She is also indebted with Gianni Dal Maso and Ana Margarida Ribeiro for helpful discussions on the subject of this paper.


%

\end{document}